\newtheorem{thm}{Theorem}[section]
\theoremstyle{definition}
\newtheorem{cor}[thm]{Corollary}
\newtheorem{prop}[thm]{Proposition}
\newtheorem{defn}[thm]{Definition}
\newtheorem{lem}[thm]{Lemma}
\newtheorem{rem}[thm]{Remark}
\newtheorem{ex}[thm]{Example}
\numberwithin{equation}{section}
\begin{document}
\title[Second semimodules over commutative semirings]
{Second semimodules over commutative semirings}

\author[Faranak Farshadifar]%
{Faranak Farshadifar}

\newcommand{\acr}{\newline\indent}
\address{Department of Mathematics Education, Farhangian University, P.O. Box 14665-889, Tehran, Iran.}
\email{f.farshadifar@cfu.ac.ir}

\subjclass[2010]{13C13, 16Y60}%
\keywords {Semiring, semimodule, second subsemimodule, maximal second subsemimodule}

\begin{abstract}
Let $R$ be a semiring.  We say that a non-zero subsemimodule $S$ of an $R$-semimodule $M$ is
 \emph{second} if for each $a \in R$, we have $aS=S$ or $aS=0$.
The aim of this paper is to study the notion of second subsemimodules of
semimodules over commutative semirings.
\end{abstract}
\maketitle
\section{Introduction}
\noindent
A \textit{semiring} is a non-empty set $R$ together with two binary
operations addition $(+)$ and multiplication $(\cdot)$ such that $(R,+)$ is a commutative monoid with identity element $0$; $(R,.)$ is a monoid with identity element $1\neq 0$;  $0a=0=a0$ for all $a\in R$; $a(b+c)=ab+ac$ and $(b+c)a=ba+ca$ for every $a,b,c\in R$. $R$ is  called a \textit{commutative semiring} if the monoid $(R,.)$ is commutative. In this paper we assume that all semirings are commutative.

A non-empty subset $I$ of a semiring $R$ is called an \textit{ideal} of $R$ if $a+b\in I$ and $ra\in I$ for all $a,b\in I$ and $r\in R$.  An ideal $I$ of a semiring $R$ is \textit{subtractive} if $a+b\in I$ and $b\in I$ imply that
$a\in I$ for all $a,b\in R$.
Let $(M,+)$ be an additive abelian monoid
with additive identity $0_{M}$. Then $M$ is called an \textit{$R$-semimodule} if there exists a scalar multiplication $R\times M\rightarrow M$ denoted by $(r,m)\mapsto rm$, such that $(rs^{\prime})m=r(r^{\prime}m)$; $r(m+m^{\prime})= rm+rm^{\prime}$; $(r+r^{\prime})m=rm+r^{\prime}m$; $1m=m$ and $r0_{M}=0_{M}=0m$ for all $r,r^{\prime}\in R$ and all $m,m^{\prime}\in M$.
A \textit{subsemimodule} $N$ of a semimodule $M$ is a non-empty subset of $M$ such that $m+n\in N$ and $rn\in N$ for all $m,n\in N$ and $r\in R$.
A subsemimodule $N$ of an $R$-semimodule $M$ is called a \textit{subtractive subsemimodule}
or a \textit{$k$-subsemimodule} if $x, x+y \in N$ implies $y \in N$. It is clear that $N$ is subtractive
if and only $N =\overline{N}$, where $\overline{N} =\{r \in R | r + x = y \ for \ some \ x, y \in N \}$. For each subsemimodule $N$ of $M$, $\overline{N}$ is a $k$-subsemimodule of $M$. $M$ itself is a $k$-subsemimodule of $M$ and $0$ is also a
$k$-subsemimodule of $M$.

A proper subsemimodule $N$ of an $R$-semimodule $M$ is said to be \textit{prime} in $M$,
if $rx \in N$ with $r \in R$ and $x \in M$ implies $r \in (N:_RM)$ or $x \in N$ \cite{MR2397473}.
The concept of second submodule of an $R$-module (as a dual notion of prime submodules)  was introduced and studied by S.Yassemi in 2001. A non-zero submodule $N$ of an $R$-module $M$ is called  \emph{second} if for each $a \in R$, we have $aN=N$ or $aN=0$ \cite{Y01}.
This notion has obtained a great attention by many authors and now there is a considerable amount of research concerning this class of modules. For more information about this class of modules we refer the reader to \cite{FF2028}.
The algebraic structure of semirings, that are considered as a generalization of rings, plays an important role in different
branches of mathematics, especially in applied sciences and computer engineering.
The purpose of this paper is to study second semimodules as a dual notion of prime semimodules and extend some of the results of \cite{AF11, AF12, Y01} to semimodules  over commutative semirings.

\section{Second subsemimodules}
We begin with the following definition.
\begin{defn}\label{2.1}
We say that a non-zero subsemimodule $S$ of an $R$-semimodule $M$ is
 \emph{second} if for each $a \in R$, we have $aS=S$ or $aS=0$.
Also, a non-zero $R$-semimodule $M$ is said to be \textit{second}, if $M$ is a second subsemimodule of $M$.
\end{defn}

\begin{prop}\label{27.6}
Let $N$ be a subsemimodule of an $R$-semimodule $M$. Then the following are equivalent:
\begin{itemize}
\item [(a)] $N$ is a second subsemimodule of $M$;
\item [(b)] $N\not=0$ and $aS \subseteq K$ implies that $aS=0$ or $S\subseteq K$ for each $a \in R$ and subsemimodule $K$ of $M$.
\end{itemize}
\end{prop}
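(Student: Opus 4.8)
The plan is to prove the two implications separately; the content lies entirely in the direction $(b)\Rightarrow(a)$, and even there it reduces to testing condition (b) against one well-chosen subsemimodule. For $(a)\Rightarrow(b)$ I would just unwind the definitions: if $N$ is second then $N\neq 0$ by Definition \ref{2.1}, so only the implication needs checking. Given $a\in R$ and a subsemimodule $K$ of $M$ with $aN\subseteq K$, secondness gives $aN=N$ or $aN=0$; in the first case $N=aN\subseteq K$, in the second $aN=0$, so the conclusion of (b) holds. (I read the $S$ appearing in the statement of (b) as $N$.)

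For $(b)\Rightarrow(a)$ the key point is that, for a fixed $a\in R$, the set $aN=\{an:n\in N\}$ is itself a subsemimodule of $M$: it contains $0_M=a0_M$; it is closed under addition since $an+an'=a(n+n')$ with $n+n'\in N$; and it is closed under the $R$-action since $r(an)=a(rn)$ — here commutativity of $R$ is used — with $rn\in N$. I would then apply hypothesis (b) to the pair consisting of $a$ and $K:=aN$. Since $aN\subseteq aN$ trivially, (b) forces $aN=0$ or $N\subseteq aN$. In the latter case, combining with the automatic inclusion $aN\subseteq N$ (valid because $N$ is a subsemimodule) yields $aN=N$. Hence every $a\in R$ satisfies $aN=0$ or $aN=N$, and since $N\neq 0$ is part of (b), $N$ is second.

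I do not anticipate a genuine obstacle. The only steps deserving a line of justification are that $aN$ is a subsemimodule — so that it is an admissible choice of $K$ in (b), and this is precisely where commutativity of $R$ enters — and the built-in inclusion $aN\subseteq N$; everything else is a direct translation of the definition of \emph{second}. One could phrase $(b)\Rightarrow(a)$ without naming $aN$ explicitly, but singling it out as the test subsemimodule is what makes the equivalence transparent.
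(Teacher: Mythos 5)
Your proof is correct and follows the same route as the paper: the paper's terse remark that $(b)\Rightarrow(a)$ ``follows from the fact that $aN\subseteq aN$'' is exactly your choice of $K=aN$ as the test subsemimodule, after which $aN\subseteq N$ upgrades $N\subseteq aN$ to $aN=N$. Your added checks (that $aN$ is a subsemimodule, using commutativity, and the reading of the statement's $S$ as $N$) simply fill in details the paper leaves implicit.
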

\begin{proof}
$(a)\Rightarrow (b)$.
This is clear.

$(b)\Rightarrow (a)$.
This follows from the fact that $aN\subseteq aN$.
\end{proof}

\begin{defn}
Let $M$ be an $R$-semimodule. We say that a subsemimodule $N$ of $M$ is
a \textit{minimal subsemimodule} of $M$ if there is no subsemimodule $K$ of $M$ satisfying $0\subset K \subset N$.
\end{defn}

\begin{ex}
Assume that $\Bbb Z_0^+$ is the set of non-negative integers and consider the $\Bbb Z_0^+$-semimodule $M=\Bbb Z_{16}$ and take $N=\{\bar{0}, \bar {8}\}$ as a subsemimodule of $M$. Then $N$ is a minimal subsemimodule of $M$.
\end{ex}

\begin{rem}
Clearly, every minimal subsemimodule of $R$-semimodule $M$ is a second subsemimodule of $M$. But the converse is not true in general. For example, consider the $\Bbb Z_0^+$-semimodule $M=\Bbb Q^+$. Then $M$ is a second subsemimodule of $M$ which is not a minimal subsemimodule of $M$.
\end{rem}

An $R$-semimodule $M$ is called a \textit{comultiplication $R$-semimodule} if for any subsemimodule
$N$ of $M$ there exists an ideal $I$ of $R$ such that $N = (0 :_M I)$ \cite{MR4728970}. Clearly, $(0 :_M I)$ is a subtractive subsemimodule of $M$ for each ideal $I$ of $R$. Thus if $M$ is a comultiplication $R$-semimodule, then every subsemimodule of $M$ is a subtractive subsemimodule.

 An $R$-semimodule $M$ is called a $k$-comultiplication $R$-semimodule
if for any subtractive subsemimodule $N$ of $M$, there exists an ideal $I$ of $R$ such that $N = (0 :_
M I)$ \cite{MR4728970}.
\begin{prop}\label{2.2}
Let $R$ be a semiring, $M$ an $R$-semimodule and $N$ a subsemimodule of $M$. Then we have the following.
\begin{itemize}
\item [(a)] If $N$ is a second subsemimodule of $M$, then $Ann_R(N)$ is a prime $k$-ideal of $R$. In this case, we say that $N$ is
$Ann_R(N)$-second.
\item [(b)] If $N$ is a subsemimodule of a comultiplication $R$-semimodule $M$ such that $Ann_R(N)$ is prime ideal of $R$, then
$N$ is a second subsemimodule of $M$.
\end{itemize}
\end{prop}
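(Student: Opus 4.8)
The plan is to treat the two parts separately; part (a) is a direct verification from the definitions, while part (b) hinges on a structural identity for comultiplication semimodules.

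For part (a), note first that $N\neq 0$ by the definition of a second subsemimodule, so $1N=N\neq 0$ and hence $Ann_R(N)$ is a proper ideal; checking that it is an ideal is routine from the semimodule axioms (if $a,b\in Ann_R(N)$ then $(a+b)N\subseteq aN+bN=0$, and if $a\in Ann_R(N)$, $r\in R$, then $(ra)N=r(aN)=0$). For primeness, suppose $ab\in Ann_R(N)$, i.e. $(ab)N=0$. Since $N$ is second, $bN=N$ or $bN=0$; in the first case $aN=a(bN)=(ab)N=0$, so $a\in Ann_R(N)$, and the second case gives $b\in Ann_R(N)$ directly. Finally, for the $k$-ideal property, suppose $a,a+b\in Ann_R(N)$; then for each $n\in N$ we have $an=0$, so $(a+b)n=an+bn=bn$, while $(a+b)n=0$, forcing $bn=0$ and hence $b\in Ann_R(N)$. (This last argument in fact shows $Ann_R(N)$ is subtractive for every subsemimodule $N$.)

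For part (b), the key step is the identity $N=(0:_M Ann_R(N))$, valid for every subsemimodule $N$ of a comultiplication semimodule $M$. To see it, use the comultiplication hypothesis to write $N=(0:_M I)$ for some ideal $I$ of $R$; then $IN=0$, so $I\subseteq Ann_R(N)$ and therefore $(0:_M Ann_R(N))\subseteq (0:_M I)=N$, while the reverse inclusion $N\subseteq (0:_M Ann_R(N))$ is immediate from the definition of the annihilator. Since $Ann_R(N)$ is prime, in particular proper, we get $N\neq 0$. Now fix $a\in R$: if $a\in Ann_R(N)$ then $aN=0$, and otherwise I claim $Ann_R(aN)=Ann_R(N)$. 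Indeed $\supseteq$ is clear, and for $\subseteq$, if $b(aN)=0$ then $(ba)N=0$, so $ba\in Ann_R(N)$, and since $Ann_R(N)$ is prime with $a\notin Ann_R(N)$ we conclude $b\in Ann_R(N)$. Applying the displayed identity to both $N$ and $aN$ then yields $aN=(0:_M Ann_R(aN))=(0:_M Ann_R(N))=N$. Thus $aN=N$ or $aN=0$ for every $a\in R$, and $N$ is second.

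I do not expect a genuine obstacle here; the argument is essentially a transcription of the module-theoretic proofs being extended (cf. \cite{AF11, AF12, Y01}). The only point requiring care is that $(M,+)$ is merely a commutative monoid, so the subtractive-ideal step in part (a) must be carried out via the expansion $(a+b)n=an+bn$ rather than by cancellation, and one must confirm that the comultiplication definition for semimodules is stated in exactly the form needed to derive the identity $N=(0:_M Ann_R(N))$.
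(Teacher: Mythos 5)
Your proof is correct. Part (a) matches the paper's argument in substance: the paper gets primeness by noting $rs\in Ann_R(N)$ gives $sN\subseteq (0:_M r)$ and invoking the second property, which is the same computation as yours, but it simply cites Goswami--Dube for the fact that $Ann_R(N)$ is a $k$-ideal, whereas you verify subtractivity directly via $(a+b)n=an+bn$; your version is self-contained and, as you note, works for every subsemimodule. In part (b) you take a genuinely different (though closely related) route. The paper verifies the containment characterization of Proposition \ref{27.6}: given $aN\subseteq K$, it writes $K=(0:_M I)$, deduces $aI\subseteq Ann_R(N)$, and splits into the cases $I\subseteq Ann_R(N)$ (giving $N\subseteq K$) or $b\in I\setminus Ann_R(N)$ (giving $a\in Ann_R(N)$ by primeness). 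You instead work straight from the definition of second, using the identity $N=(0:_M Ann_R(N))$ valid in comultiplication semimodules together with the computation $Ann_R(aN)=Ann_R(N)$ for $a\notin Ann_R(N)$, to conclude $aN=(0:_M Ann_R(aN))=(0:_M Ann_R(N))=N$. Both arguments rest on the same comultiplication mechanism; the paper's version is marginally shorter because it quantifies over an arbitrary $K$ and never needs to compute $Ann_R(aN)$, while yours isolates the reusable identity $L=(0:_M Ann_R(L))$ (which the paper in fact uses later, e.g.\ in Propositions \ref{t3.2} and \ref{28.51}) and is also slightly more careful in recording explicitly that primeness forces $N\neq 0$.
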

\begin{proof}
(a) First note that by \cite[Proposition 2.3]{MR4835184}, $Ann_R(N)$ is a $k$-ideal of $R$.
Let $N$ be a second subsemimodule of $M$. Then $N\not=0$ and so  $Ann_R(N)\not=R$.
Now suppose that $rs \in Ann_R(N)$. Then $sN\subseteq (0:_Mr)$. Thus $sN=0$ or $rN=0$, as needed.

(b) Suppose that $N$ is a subsemimodule of a comultiplication $R$-semimodule $M$ such that $Ann_R(N)$ is prime ideal of $R$. Let $aN\subseteq K$ for some $a \in R$ and subsemimodule $K$ of $M$. As $M$ is a comultiplication $R$-semimodule, there exists an ideal $I$ of $R$ such that $K=(0:_MI)$. Therefore, $aI \subseteq Ann_R(N)$.
If $I \subseteq Ann_R(N)$, then $N\subseteq K$ and we are done. So suppose that $b \in I \setminus Ann_R(N)$. Then $ab \in Ann_R(N)$. This implies that $a \in  Ann_R(N)$ or $b \in  Ann_R(N)$. Since $b \not \in Ann_R(N)$, we have $a \in  Ann_R(N)$, as needed.
\end{proof}

The following example shows that the converse of Proposition \ref{2.2} (a) is not true in general.
\begin{ex}
Let $R$ be  $\Bbb Z^* = \Bbb Z^+ \cup \{0\}$. Then $M = \Bbb Z^* \times \Bbb Z^*$ is an $R$-semimodule.
Consider the subsemimodule $N = 0 \times 4\Bbb Z^*$ of $M$.  Then $Ann_R(N)=0$ is prime ideal of $R$ but $N$ is not a second subsemimodule of $M$. Because $2N\not=N$ and $2N\not=0$.
\end{ex}

A proper ideal $I$ of a semiring $R$ is said to be a \textit{strong ideal} if for
each $a\in I$ there exists $b \in I$ such that $a+b = 0$ \cite{MR2652253}.
\begin{prop}\label{t3.2}
Let $M$ be a finitely generated comultiplication $R$-semimodule and $P$ be a strong prime $k$-ideal of $R$ containing $Ann_R(M)$. Then
$(0:_MP)$ is a second subsemimodule of $M$.
\end{prop}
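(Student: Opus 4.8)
The plan is to deduce the statement from Proposition \ref{2.2}(b). Since $M$ is a comultiplication $R$-semimodule, that proposition reduces everything to two assertions about $N:=(0:_M P)$: that $N$ is nonzero, and that $Ann_R(N)$ is a prime ideal of $R$. For the second I will in fact show $Ann_R(N)=P$, which is prime by hypothesis. One half of this equality is free: by the definition of $N$ we have $Pm=0$ for every $m\in N$, so $PN=0$ and hence $P\subseteq Ann_R(N)$; in particular $Ann_R(N)\neq R$ as soon as $N\neq 0$.

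To see $N\neq 0$, I would argue by contradiction, assuming $(0:_M P)=0$, and run a Nakayama-type argument built on the finite generation of $M$. Writing $M=Rm_1+\cdots+Rm_n$, one first uses the comultiplication hypothesis (every subsemimodule is $(0:_M I)$ for some ideal $I$) together with $(0:_M P)=0$ and $Ann_R(M)\subseteq P$ to obtain $PM=M$, hence relations $m_i=\sum_j a_{ij}m_j$ with all $a_{ij}\in P$. Here the assumption that $P$ is a \emph{strong} ideal is essential: it provides the additive inverses $-a_{ij}\in P$ needed to form the matrix $E-A$ (with $E$ the identity matrix and $A=(a_{ij})$) over $R$ and to carry out the classical adjugate/determinant computation inside the semiring. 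This produces an element $c\in P$ with $(1+c)M=0$, so that $1+c\in Ann_R(M)\subseteq P$; since $c\in P$ and $P$ is a $k$-ideal, subtractivity forces $1\in P$, contradicting that $P$ is proper. Hence $N\neq 0$.

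For the remaining inclusion $Ann_R(N)\subseteq P$, note first that comultiplication yields $N=(0:_M Ann_R(N))$: indeed $N=(0:_M I)$ for some ideal $I$, so $I\subseteq Ann_R(N)$ and therefore $(0:_M Ann_R(N))\subseteq(0:_M I)=N\subseteq(0:_M Ann_R(N))$. Now take $a\in Ann_R(N)$. Then $aN=0$, so $N\subseteq(0:_M a)$, and using the routine identity $(0:_M I)\cap(0:_M J)=(0:_M (I+J))$ we get $(0:_M (P+Ra))=(0:_M P)\cap(0:_M a)=N$. If $a\notin P$, then $P+Ra$ is an ideal strictly larger than the prime $k$-ideal $P$ and still containing $Ann_R(M)$; but the same finite-generation considerations as in the previous paragraph show that $I\mapsto(0:_M I)$ is injective on ideals containing $Ann_R(M)$ (equivalently, enlarging $P$ strictly inside such an ideal strictly shrinks $(0:_M -)$), so $(0:_M (P+Ra))\subsetneq(0:_M P)=N$, a contradiction. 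Hence $a\in P$, so $Ann_R(N)=P$ and Proposition \ref{2.2}(b) finishes the proof. (Equivalently one can check the definition directly: for $a\in P$ clearly $aN=0$, while for $a\notin P$ the argument just given gives $aN=N$.)

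The formal parts above — the $a\in P$ versus $a\notin P$ dichotomy, the annihilator/intersection identities, and reading off $Ann_R(N)=P$ — are routine. The real content, and the step I expect to be the main obstacle, is the pair of facts about \emph{finitely generated comultiplication semimodules}: that $(0:_M P)\neq 0$ for a proper prime ideal $P\supseteq Ann_R(M)$, and that $I\mapsto(0:_M I)$ is injective on ideals containing $Ann_R(M)$. These are the semimodule analogues of standard facts for comultiplication modules, and their proofs in the semiring setting are precisely where the hypotheses "$P$ strong" and "$P$ a $k$-ideal" are used, since they supply the additive cancellations that come for free in the module case.
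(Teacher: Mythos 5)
Your reduction to Proposition \ref{2.2}(b) and the goal $Ann_R((0:_MP))=P$ is exactly the paper's strategy, and your first paragraph (nonvanishing of $N=(0:_MP)$) is essentially sound: the determinant-trick statement you sketch is precisely the content of the result the paper cites as a black box, \cite[Proposition 2.3]{MR2652253}, namely that for $M$ finitely generated and $P$ a strong prime $k$-ideal containing $Ann_R(M)$, $rM\subseteq PM$ forces $r\in P$. (Two small remarks: your step ``$(0:_MP)=0\Rightarrow PM=M$'' is asserted but not argued; it does follow from comultiplication, since if $PM\neq M$ then $PM=(0:_MAnn_R(PM))\neq M$ gives some $s\in Ann_R(PM)$ with $sM\neq 0$, while $P(sM)=s(PM)=0$ puts $sM\subseteq(0:_MP)=0$. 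Also, a separate proof that $N\neq 0$ is never needed: once $Ann_R(N)=P\neq R$ is established, $N=0$ is impossible, which is how the paper gets it for free.)

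The genuine gap is in your proof of the hard inclusion $Ann_R(N)\subseteq P$. You rest it on the claim that $I\mapsto(0:_MI)$ is injective on ideals containing $Ann_R(M)$, justified only by ``the same finite-generation considerations as in the previous paragraph.'' But the determinant trick proves a statement of a different shape (that $IM=M$ is impossible for $I$ inside a proper strong $k$-ideal); it gives no information about two distinct ideals having the same annihilator submodule, and the particular instance you need --- that $(0:_M(P+Ra))\subsetneq(0:_MP)$ for $a\notin P$ --- is literally equivalent to the assertion $a\notin Ann_R(N)$ you are trying to prove, so as written this step is circular. Nor can you repair it by running your first-paragraph argument for $N=(0:_MP)$ over a quotient by $P$, since $N$ need not be finitely generated. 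The missing idea, which is the actual content of the paper's proof, is the conversion of ``$a$ kills $(0:_MP)$'' into ``$aM\subseteq PM$'' via the comultiplication identity $PM=(0:_MAnn_R(PM))$ and the colon manipulation $M=(PM:_MP)=((0:_MP):_MAnn_R(PM))\subseteq((0:_Ma):_MAnn_R(PM))=(PM:_Ma)$; only after that does the Nakayama-type result \cite[Proposition 2.3]{MR2652253} apply to give $a\in P$. Without this (or some substitute proof of your injectivity claim under the stated hypotheses), the second half of your proposal does not go through.
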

\begin{proof}
By Proposition \ref{2.2} (b), it is enough to show that $Ann_R((0:_MP))=P$. Let
$r(0:_MP)=0$. Then $(0:_MP)\subseteq (0:_Mr)$. Since $M$ is a
comultiplication semimodule, we have
$$
M=(PM:_MP)\subseteq ((0:_MAnn_R(PM)):_MP)=((0:_MP):_MAnn_R(PM))
$$
$$
\subseteq ((0:_Mr):_MAnn_R(PM))=(PM:_Mr).
$$
It follows that $rM \subseteq PM$.
This implies that $r \in P$ by \cite[Proposition 2.3]{MR2652253}.
Therefore, $P=Ann_R((0:_MP))$
because the reverse inclusion is clear.
\end{proof}

\begin{prop}\label{28.51}
Let $S$ be a second subsemimodule of a comultiplication $R$-semimodule $M$ and let $N_1, \ldots, N_t$ be subsemimodules of $M$. Then the following statements are equivalent:
\begin{itemize}
\item [(a)] $S\subseteq N_j$ for some $j$ with $1 \leq j \leq n$;
\item [(b)] $S\subseteq \sum ^t_{i=1}N_i$.
\end{itemize}
\end{prop}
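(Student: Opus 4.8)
The plan is to dispatch $(a)\Rightarrow(b)$ immediately, since $N_j\subseteq\sum_{i=1}^{t}N_i$ for every $j$, and then prove the substantive implication $(b)\Rightarrow(a)$ by induction on $t$. The base case $t=1$ is trivial, and the whole inductive step can be reduced to the case $t=2$: writing $\sum_{i=1}^{t}N_i=N_1+(N_2+\cdots+N_t)$, where $N_2+\cdots+N_t$ is again a subsemimodule of $M$, the case $t=2$ gives either $S\subseteq N_1$ (and we are done) or $S\subseteq N_2+\cdots+N_t$, at which point the inductive hypothesis applies.

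So the heart of the argument is $t=2$: assume $S\subseteq N_1+N_2$ and show $S\subseteq N_1$ or $S\subseteq N_2$. Here I would use that $M$ is a comultiplication $R$-semimodule to write $N_1=(0:_M I_1)$ and $N_2=(0:_M I_2)$ for ideals $I_1,I_2$ of $R$. The key elementary observation is that $N_1+N_2\subseteq (0:_M I_1\cap I_2)$, because for $x\in N_1$, $y\in N_2$ and $r\in I_1\cap I_2$ one has $r(x+y)=rx+ry=0$. Consequently $(I_1\cap I_2)S=0$, i.e.\ $I_1\cap I_2\subseteq Ann_R(S)$, and hence $I_1I_2\subseteq I_1\cap I_2\subseteq Ann_R(S)$.

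Now I would invoke Proposition \ref{2.2}(a): since $S$ is second, $Ann_R(S)$ is a prime $k$-ideal of $R$. From $I_1I_2\subseteq Ann_R(S)$ and primeness one deduces $I_1\subseteq Ann_R(S)$ or $I_2\subseteq Ann_R(S)$ (otherwise picking $a\in I_1\setminus Ann_R(S)$ and $b\in I_2\setminus Ann_R(S)$ yields $ab\in I_1I_2\subseteq Ann_R(S)$, contradicting primeness). In the first case $I_1S=0$, so $S\subseteq(0:_M I_1)=N_1$; in the second, symmetrically $S\subseteq N_2$. This settles $t=2$ and completes the induction.

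The step I expect to be the only real content is the passage from ``$S$ lies in a sum of colon subsemimodules'' to ``a product of ideals kills $S$'': this is exactly where the comultiplication hypothesis is used, and it is what lets the primeness of $Ann_R(S)$ be brought to bear. Everything else — the reduction from $t$ summands to two, and the final translation back through $S\subseteq(0:_M I_i)\iff I_iS=0$ — is routine bookkeeping. One should also note that the index range in (a) should read $1\le j\le t$ rather than $1\le j\le n$.
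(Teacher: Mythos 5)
Your proposal is correct, and its skeleton is the same as the paper's: use the comultiplication hypothesis to replace the $N_i$ by colon subsemimodules $(0:_M I_i)$ and then exploit the primeness of $Ann_R(S)$ from Proposition \ref{2.2}(a). The differences are in how the two auxiliary facts are handled. The paper works with all $t$ summands at once, writing $N_i=(0:_M Ann_R(N_i))$ and citing \cite[Lemma 4.4]{MR4728970} for $\sum_i (0:_M Ann_R(N_i))\subseteq (0:_M \cap_i Ann_R(N_i))$ and \cite[Lemma 2.4]{MR2732626} for the fact that a prime ideal containing a finite intersection of ideals contains one of them; it then needs the identity $S=(0:_M Ann_R(S))$ (again from comultiplication) to translate $Ann_R(N_j)\subseteq Ann_R(S)$ back into $S\subseteq N_j$. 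You instead reduce to two summands by induction (using that $N_2+\cdots+N_t$ is again a subsemimodule, hence again a colon subsemimodule), verify the containment $N_1+N_2\subseteq (0:_M I_1\cap I_2)$ by direct computation, and run the elementwise prime argument on $a\in I_1\setminus Ann_R(S)$, $b\in I_2\setminus Ann_R(S)$ yourself; your endgame $I_jS=0\Rightarrow S\subseteq (0:_M I_j)=N_j$ is also slightly cleaner, since it does not pass through $S=(0:_M Ann_R(S))$. So your argument is self-contained where the paper leans on two external lemmas, at the modest cost of an induction; both are valid, and your closing remark is right that the index bound in (a) should be $1\le j\le t$ (a typo also present in the paper).
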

\begin{proof}
$(a)\Rightarrow (b)$. This is clear.

$(b)\Rightarrow (a)$.
As $M$ is a comultiplication $R$-semimodule, we have $N_j=(0:_MAnn_R(N_j))$ for all $j$ with $1 \leq j \leq n$.
Thus $S\subseteq \sum ^t_{i=1}N_i $ implies that
$$
 S\subseteq \sum ^t_{i=1}(0:_MAnn_R(N_i))=(0:_M\cap_{i \in I}Ann_R(N_j))
$$
by \cite[Lemma 4.4.]{MR4728970}.
Hence, $\cap_{i \in I}Ann_R(N_j)\subseteq Ann_R(S)$. Now by using Proposition \ref{2.2} (a) and \cite[Lemma 2.4]{MR2732626},
$Ann_R(N_j)\subseteq Ann_R(S)$ for some $j$ with $1 \leq j \leq n$. This implies that $(0:_MAnn_R(S))=S\subseteq N_j=(0:_MAnn_R(N_j))$.
\end{proof}

Let $M$, $N$ be $R$-semimodules, and $f$ be a map from $M$ to $N$. $f$ is said to be a \textit{semimodule
homomorphism} (see \cite{MR1746739}) if
\begin{itemize}
\item [(1)]  $f (x + y) = f (x) + f (y)$ for all $x, y \in  M$;
\item [(2)]  $f (rx) = rf (x)$ for all $r \in R$, $x \in  M$.
\end{itemize}
$Ker(f ) :=\{a \in M| f (a) = 0\}$ is called the \textit{kernel of $f$}. Also, $f(M) := \{f (a) | a \in M\}$.
It is easy to see that $Ker (f)$ is a subsemimodule of $M$ and $f(M)$ is a subsemimodule of $N$.
A semimodule homomorphism $f:M\rightarrow N$ is said to be
 \textit{$k$-regular} (\textit{kernel-regular}) if $f(x_1) = f(x_2)$, then $x_1+k_1 = x_2 +k_2$ for some $k_1, k_2\in Ker(f)$ \cite{MR2536767}.

\begin{prop}\label{2.7}
Let $f:  M \rightarrow M^{\prime}$ be a homomorphism of $R$-semimodules with $Ker (f)=0$, where $M$ is a subtractive semimodule. Then we have the following.
\begin{itemize}
\item [(a)] If $S$ is a second subsemimodule of $M$ and $f$ is $k$-regular, then $f(S)$ is a second subsemimodule of $M^{\prime}$.
\item [(b)] If $S^{\prime}$ is a second subsemimodule of $f(M)$, then $f^{-1}(S^{\prime})$ is a second subsemimodule of $M$.
\end{itemize}
\end{prop}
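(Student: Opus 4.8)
The plan is to transport the defining dichotomy ``$aS=S$ or $aS=0$'' across $f$, using the identity $f(aT)=af(T)$, which holds for every subsemimodule $T\subseteq M$ and every $a\in R$ because $f$ is an $R$-homomorphism, together with the hypothesis $Ker(f)=0$, which detects zeros: $f(m)=0$ forces $m=0$. I would also record at the outset that $Ker(f)=0$ combined with $k$-regularity makes $f$ injective on underlying sets, since $f(x)=f(y)$ yields $x+k_1=y+k_2$ with $k_1,k_2\in Ker(f)=0$, hence $x=y$; the point of the hypothesis that $M$ is subtractive is that, together with $Ker(f)=0$, it is meant to supply the same injectivity in part~(b).

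For part~(a), first I would check that $f(S)\neq 0$: as $S$ is second it is non-zero, so there is $s\in S$ with $s\neq 0$, and then $f(s)\neq 0$ because $Ker(f)=0$; also $f(S)$ is a subsemimodule of $M'$, being the image of a subsemimodule. Now fix $a\in R$. Since $S$ is second, $aS=S$ or $aS=0$. If $aS=S$ then $af(S)=f(aS)=f(S)$, and if $aS=0$ then $af(S)=f(aS)=f(0)=0$. Hence $af(S)\in\{f(S),0\}$ for every $a$, so $f(S)$ is second in $M'$. This half is essentially a one-line computation once non-triviality of $f(S)$ is in hand.

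For part~(b), put $N=f^{-1}(S')$, a subsemimodule of $M$. First I would show $N\neq 0$: pick $s'\in S'$ with $s'\neq 0$; since $S'\subseteq f(M)$, write $s'=f(m)$, and then $m\neq 0$ because $f(m)\neq 0$ and $Ker(f)=0$, so $m\in N$. I would also note $f(N)=S'$ (the inclusion $f(N)\subseteq S'$ is immediate, and $S'\subseteq f(M)$ gives the reverse). Now fix $a\in R$; since $S'$ is second, $aS'=S'$ or $aS'=0$. If $aS'=0$, then for $m\in N$ we get $f(am)=af(m)\in aS'=0$, so $am\in Ker(f)=0$ and $aN=0$. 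If $aS'=S'$, then $aN\subseteq N$ always holds, since $f(m)\in S'$ implies $f(am)=af(m)\in aS'\subseteq S'$; for the reverse inclusion, take $m\in N$, so $f(m)\in S'=aS'$, write $f(m)=as'$ with $s'\in S'$ and then $s'=f(t)$ with $t\in M$ (hence $t\in N$), giving $f(m)=af(t)=f(at)$; from $f(m)=f(at)$ I conclude $m=at\in aN$, so $aN=N$. Either way $aN\in\{N,0\}$, so $N$ is second.

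The only step that is not purely formal is the implication $f(m)=f(at)\Rightarrow m\in aN$ near the end of part~(b): in a semimodule the condition $Ker(f)=0$ by itself does not force $f$ to be injective, so this is exactly where the hypotheses on $f$ and $M$ must enter --- here I would invoke $Ker(f)=0$ together with the subtractivity of $M$ (and $k$-regularity of $f$, if that is available) to pass from $f(x)=f(y)$ to $x=y$. I expect that to be the main obstacle; the rest is bookkeeping with images, preimages, and the identity $f(aT)=af(T)$.
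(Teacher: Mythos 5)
Your overall route is the same as the paper's: check non-triviality of the image (resp.\ preimage) and then transport the dichotomy $aS\in\{S,0\}$ through the identity $f(aT)=af(T)$. Part (a) is complete and correct as you give it; your non-triviality argument (a nonzero $s\in S$ satisfies $f(s)\neq 0$ because $Ker(f)=0$) is even more direct than the paper's, which instead invokes \cite[Proposition 3.2 (ii)]{MR2536767} (using $k$-regularity and subtractivity) to get $f^{-1}(f(S))=S+Ker(f)=S$.

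In part (b), the step you flag is the real issue, and it is worth seeing how it sits relative to the paper. The only delicate implication is $aS'=S'\Rightarrow af^{-1}(S')=f^{-1}(S')$: after writing $f(m)=f(at)$ with $t\in f^{-1}(S')$ you need $m\in af^{-1}(S')$, i.e.\ some form of injectivity of $f$. Your fallback, ``$Ker(f)=0$ together with subtractivity of $M$,'' does not suffice: $M$ is automatically a subtractive subsemimodule of itself, and zero kernel alone does not make a semimodule homomorphism injective (addition $\Bbb Z_0^+\times \Bbb Z_0^+\rightarrow \Bbb Z_0^+$ over $R=\Bbb Z_0^+$ has zero kernel and is far from injective). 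What does close the step is exactly the argument you state at the outset, $Ker(f)=0$ plus $k$-regularity giving $f(x)=f(y)\Rightarrow x=y$ --- but $k$-regularity is hypothesized only in (a), not in (b). Note, however, that the paper's own proof of (b) consists of the bare assertion that $af^{-1}(S')\neq f^{-1}(S')$ and $af^{-1}(S')\neq 0$ imply $aS'\neq S'$ and $aS'\neq 0$, which is precisely the contrapositive of the implication in question, asserted without justification. So you have not missed an ingredient that the paper supplies; rather, you have isolated the step the paper leaves unproved, and to make either argument airtight one needs injectivity of $f$ (e.g.\ via $k$-regularity, or by strengthening the hypotheses of (b)) at exactly that point.
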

\begin{proof}
(a) Let $S$ be a second subsemimodule of $M$.
If $f(S)=0$, then $f^{-1}(f(S))=S+Ker (f)=S=0$ by \cite[Proposition 3.2 (ii)]{MR2536767} because $f$ is $k$-regular and $M$ is subtractive. This contradiction shows that $f(S)\not=0$. Now assume that $af(S) \not =f(S)$ and $af(S) \not =0$ for some $a\in R$. Then we have $aS \not =S$ and  $aS \not =0$. These are contradictions.

(b) Let $S^{\prime}$ be a second subsemimodule of $f(M)$. If $f^{-1}(S^{\prime})=0$, then $S^{\prime}=S^{\prime}\cap f(M)=f(f^{-1}(S^\prime))=0$. This contradiction shows that  $f^{-1}(S^{\prime})\not=0$. Now, let $af^{-1}(S^{\prime})\not = f^{-1}(S^{\prime})$ and $af^{-1}(S^{\prime})\not =0$ for some $a\in R$. Then $aS^{\prime}\not =S^{\prime}$  and $aS^{\prime}\not =0$.  These are contradictions.
\end{proof}

\noindent
The following corollary is now evident.
\begin{cor}\label{2.8}
If $N$ and $K$ are subsemimodules of an $R$-semimodule $M$ with $N$ is subtractive, $K\subseteq N\subseteq M$, and $K$ is a second subsemimodule of $N$, then $K$ is a a second subsemimodule of $M$.
\end{cor}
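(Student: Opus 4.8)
The plan is to obtain this as an immediate consequence of Proposition \ref{2.7}(a) by feeding that result the inclusion map. Concretely, I would consider the canonical inclusion homomorphism $\iota\colon N\rightarrow M$ given by $\iota(n)=n$. This is an $R$-semimodule homomorphism, and $Ker(\iota)=\{n\in N: \iota(n)=0\}=0$; moreover, by hypothesis the domain $N$ is a subtractive semimodule, so $\iota$ meets the standing hypotheses of Proposition \ref{2.7}.

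The one small thing to check is that $\iota$ is $k$-regular. Since $\iota$ is injective, $\iota(x_1)=\iota(x_2)$ forces $x_1=x_2$, whence $x_1+0=x_2+0$ with $0\in Ker(\iota)$; thus $\iota$ is (trivially) $k$-regular. With this in place, Proposition \ref{2.7}(a), applied with the roles of $M$, $M^{\prime}$, $S$ there played by $N$, $M$, $K$ here, yields that $\iota(K)$ is a second subsemimodule of $M$. Since $\iota(K)=K$, this is exactly the assertion.

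I do not expect any genuine obstacle here: the entire content is in recognizing that the inclusion $N\hookrightarrow M$ satisfies the hypotheses of Proposition \ref{2.7}(a), the only point that is not purely formal being that an injective semimodule homomorphism is automatically $k$-regular. (Alternatively, one can observe directly from Definition \ref{2.1} that being second is intrinsic to $K$ as an $R$-semimodule: the condition ``$aK=K$ or $aK=0$ for all $a\in R$'' makes no reference to the ambient semimodule, so ``$K$ is second in $N$'' and ``$K$ is second in $M$'' are literally the same statement once $K\subseteq N\subseteq M$ is known. But the route through Proposition \ref{2.7} is the one signalled by the phrase ``now evident''.)
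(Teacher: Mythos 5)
Your proposal is correct and matches the paper's intended argument: the paper offers no written proof, stating only that the corollary is ``now evident'' from Proposition \ref{2.7}, and your application of Proposition \ref{2.7}(a) to the inclusion $\iota\colon N\hookrightarrow M$ (with the observation that an injective homomorphism is trivially $k$-regular and that the subtractivity hypothesis on $N$ fills the remaining requirement) is exactly that derivation.
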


\begin{prop}\label{df2.1}
Let $S$ be a subsemimodule of an $R$-semimodule $M$ such that $Ann_R(S)$ is a maximal ideal of $R$. Then $S$
is a second subsemimodule.
\end{prop}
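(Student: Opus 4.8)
The plan is to verify the definition of a second subsemimodule directly, using only the hypothesis that $I := Ann_R(S)$ is a maximal ideal. First I would observe that since $I$ is maximal, it is in particular a proper ideal, so $I \neq R$, which forces $S \neq 0$; this handles the ``non-zero'' requirement in Definition \ref{2.1}. Then, fixing an arbitrary $a \in R$, I would split into the two natural cases: either $a \in I$ or $a \notin I$. If $a \in I = Ann_R(S)$, then by definition $aS = 0$, and we are done in this case.

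The substantive case is $a \notin I$. Here the idea is to exploit maximality: the ideal $I + Ra$ strictly contains $I$, so by maximality $I + Ra = R$. Hence there exist $b \in I$ and $r \in R$ with $b + ra = 1$ (writing $1$ for the multiplicative identity of $R$). Now I would apply both sides to an arbitrary element $s \in S$: since $1 \cdot s = s$ and $b s = 0$ (because $b \in Ann_R(S)$), we get $s = b s + r a s = r a s \in RaS \subseteq aS$. Since $s$ was arbitrary, $S \subseteq aS$; the reverse inclusion $aS \subseteq S$ is automatic because $S$ is a subsemimodule. Therefore $aS = S$, completing the verification that for every $a \in R$ we have $aS = S$ or $aS = 0$, i.e. $S$ is second.

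The only point that requires a little care is the step ``$I + Ra = R$ implies $b + ra = 1$ for some $b \in I$, $r \in R$.'' In a general semiring one must check that the ideal $I + Ra$ really does consist of all sums $x + y$ with $x \in I$ and $y \in Ra$, and that $R$ being equal to this ideal delivers a representation of $1$ of the stated form; this is standard but worth stating, since subtraction is unavailable. No subtractivity or $k$-ideal hypothesis is needed anywhere, and we do not even need $S$ to be subtractive — the argument only uses $bs = 0$ and the monoid identity $1s = s$. Thus the main (mild) obstacle is simply being scrupulous about additive, rather than ring-theoretic, manipulations; there is no deeper difficulty.
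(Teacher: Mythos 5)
Your proof is correct and follows essentially the same route as the paper: use maximality of $Ann_R(S)$ to get $Ann_R(S)+Ra=R$ when $aS\neq 0$, and conclude $aS=S$ (the paper does this at the level of subsemimodules, $Ann_R(S)S+aS=S$, while you write $1=b+ra$ and argue elementwise, which is the same idea). Your observation that $S\neq 0$ follows from $Ann_R(S)\neq R$ is in fact a cleaner justification than the paper's opening line.
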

\begin{proof}
As $Ann_R(S)\not=0$, we have $S\not=0$. Let $r \in R$. If $rS\not =0$, then $Ann_R(S) \subseteq Ann_R(S) +Rr\subseteq R$ implies that
$Ann_R(S) +Rr= R$. Thus $Ann_R(S)S +rS= S$. Hence $rS=S$, as needed.
\end{proof}

\begin{prop}\label{df2.9}
Let $\mathfrak{p} \in Spec(R)$. Then the following hold:
\begin{itemize}
\item [(a)] The sum of $\mathfrak{p}$-second semimodules is a $\mathfrak{p}$-second semimodule.
\item [(b)] Every product of $\mathfrak{p}$-second semimodule is a $\mathfrak{p}$-second semimodule.
\item [(c)] Every non-zero quotient of a $\mathfrak{p}$-second semimodule is likewise $\mathfrak{p}$-second.
\end{itemize}
\end{prop}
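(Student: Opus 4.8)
The plan is to dispose of all three parts at once via a single observation. If $S$ is $\mathfrak{p}$-second, then by Definition~\ref{2.1} together with Proposition~\ref{2.2}(a), the behaviour of scalar multiplication on $S$ is completely dictated by $\mathfrak{p}$: for $a\in R$ we have $aS=0$ exactly when $a\in Ann_R(S)=\mathfrak{p}$, and $aS=S$ for every $a\notin\mathfrak{p}$. Since each of the three constructions (sum, product, homomorphic image) commutes with scalar multiplication, this dichotomy is inherited by the new semimodule; the only point requiring genuine attention is that the new semimodule is non-zero, since that is what makes it honestly \emph{second} rather than vacuously satisfying $aS\in\{0,S\}$. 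In each case I will also read off $Ann_R$ directly from the dichotomy, which pins it to $\mathfrak{p}$.

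For (a), let $\{S_i\}_{i\in I}$ be $\mathfrak{p}$-second subsemimodules of an $R$-semimodule $M$ and put $S=\sum_{i\in I}S_i$; since $I\neq\emptyset$ and each $S_i\neq 0$, we have $S\neq 0$. Using that $x\mapsto ax$ is a semimodule homomorphism one gets $aS=\sum_{i\in I}aS_i$: when $a\in\mathfrak{p}$ every summand is $0$, so $aS=0$; when $a\notin\mathfrak{p}$ every summand equals $S_i$, so $aS=S$. Hence $S$ is second, and the same split shows $\mathfrak{p}\subseteq Ann_R(S)$ while no element outside $\mathfrak{p}$ annihilates $S$, i.e.\ $Ann_R(S)=\mathfrak{p}$. (If one instead reads ``sum'' as an external direct sum, the argument is verbatim.)

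For (b), take $\mathfrak{p}$-second semimodules $\{S_i\}_{i\in I}$ and form $P=\prod_{i\in I}S_i$ with componentwise operations, so $P\neq 0$. For $a\in\mathfrak{p}$ we get $a(s_i)_i=(as_i)_i=0$, hence $aP=0$. For $a\notin\mathfrak{p}$, given $(s_i)_i\in P$ choose, for each $i$, an element $t_i\in S_i$ with $at_i=s_i$ (possible since $aS_i=S_i$); then $(t_i)_i\in P$ and $a(t_i)_i=(s_i)_i$, so $aP=P$. As in (a), this yields $Ann_R(P)=\mathfrak{p}$. For (c), let $f\colon S\to\overline S$ be a surjective homomorphism of $R$-semimodules with $S$ being $\mathfrak{p}$-second and $\overline S\neq 0$; then $a\overline S=af(S)=f(aS)$ equals $f(0)=0$ for $a\in\mathfrak{p}$ and equals $f(S)=\overline S$ for $a\notin\mathfrak{p}$, so $\overline S$ is second with $Ann_R(\overline S)=\mathfrak{p}$.

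The step I expect to be the main obstacle is the surjectivity claim in part (b): one must be sure that ``multiplication by $a$'' is surjective on the whole product $\prod_{i}S_i$, not merely that $aP\subseteq P$, and this requires the componentwise choice of preimages above (an appeal to the axiom of choice when $I$ is infinite) to assemble into a legitimate element of $\prod_i S_i$. Everything else reduces to the distributivity of scalar multiplication over sums, direct products and homomorphic images of semimodules, which is routine.
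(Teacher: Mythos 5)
Your proposal is correct, and since the paper itself offers no argument here (its proof is just ``This is straightforward''), your verification via the dichotomy $aS=0$ for $a\in\mathfrak{p}$ and $aS=S$ for $a\notin\mathfrak{p}$, applied componentwise to sums, products and homomorphic images, is exactly the routine check the paper has in mind. The non-vanishing observations and the identification $Ann_R=\mathfrak{p}$ in each case are handled properly, so nothing is missing.
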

\begin{proof}
This is straightforward.
\end{proof}

\begin{prop}\label{8l3.14}
Let $M$ be an $R$-semimodule. If every non-zero subsemimodule of $M$
is second, then for each subsemimodule $K$ of $M$ and each ideal $I$ of $R$, we have
$(K :_M I) = (K :_M I^2)$. Also for any two ideals $A,B$ of $R$, $(K :_M A)$ and
$(K :_M B)$ are comparable.
\end{prop}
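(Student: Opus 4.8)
The plan is to treat the two assertions separately, using throughout that every nonzero subsemimodule in sight is second --- hence, by Proposition~\ref{2.2}(a), has prime annihilator, and, by Proposition~\ref{27.6}, has the absorption property that $aN\subseteq K$ forces $aN=0$ or $N\subseteq K$ --- together with the routine facts that $(K:_M I)$ is a subsemimodule of $M$ containing $K$ and that $I^2\subseteq I$. For the identity $(K:_M I)=(K:_M I^2)$ the inclusion $\subseteq$ is then immediate. For the reverse inclusion I would take $m\in(K:_M I^2)$; if $m=0$ there is nothing to prove, so assume $m\neq0$ and set $N=Rm$, a nonzero subsemimodule, hence second. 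For each $a\in I$ we have $aN=N$ or $aN=0$. If $aN=0$ for every $a\in I$, then $am=0$ for every such $a$, so $Im=0\subseteq K$. Otherwise $aN=N$ for some $a\in I$, and iterating gives $N=aN=a(aN)=a^2N$; since $a^2\in I^2$ this yields $N=a^2N\subseteq I^2m\subseteq K$, so $m\in K$ and hence $Im\subseteq K$. In either case $m\in(K:_M I)$.

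For the comparability of $X:=(K:_M A)$ and $Y:=(K:_M B)$ I would argue by contradiction. If $X$ and $Y$ are incomparable then both are nonzero, so $W:=X+Y$ is a nonzero subsemimodule, hence second. The key step is the observation that $(AB)W\subseteq K$: for $x\in X$ one has $(AB)x=B(Ax)\subseteq BK\subseteq K$, and symmetrically for $Y$. Now apply Proposition~\ref{27.6} to each $c\in AB$. Either $W\subseteq K$ for some such $c$ --- but then, since $K\subseteq X\cap Y$, we get $X=W=Y$, contradicting incomparability --- or $cW=0$ for every $c\in AB$, that is, $AB\subseteq Ann_R(W)$. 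As $Ann_R(W)$ is prime by Proposition~\ref{2.2}(a), this forces $A\subseteq Ann_R(W)$ or $B\subseteq Ann_R(W)$; in the first case $W\subseteq(0:_M A)\subseteq X$, so $Y\subseteq X$, and symmetrically in the second $X\subseteq Y$, each a contradiction.

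The main obstacle --- and the reason these particular choices are forced --- is the absence of subtraction in a semimodule: a relation such as $bm+bn=0$ cannot be split into $bm=0$ and $bn=0$, so the ``second'' dichotomy must be applied to a subsemimodule on which the relevant ideal acts as an honest zero. Using $N=Rm$ in the first part (so that $aN=N$ can be squared down into $I^2m$), and passing to the product ideal $AB$ in the second part (which lies inside both $A$ and $B$ and therefore annihilates the cross terms), are precisely the devices that get around this; the remaining bookkeeping is routine.
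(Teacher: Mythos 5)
Your proof is correct. The paper offers no argument here (its proof reads only ``This is straightforward''), so there is nothing to compare against; your elaboration --- applying the second hypothesis to $Rm$ to get $(K :_M I) = (K :_M I^2)$, and to $W=(K :_M A)+(K :_M B)$ together with the primeness of $Ann_R(W)$ from Proposition~\ref{2.2}(a) to get comparability --- is a complete and valid way of filling in that omission.
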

\begin{proof}
This is straightforward.
\end{proof}

\begin{defn}
We say that a subsemimodule $N$ of an $R$-semimodule $M$
is \emph{coidempotent} if $N=(0:_MAnn_R^2(N))$.
Also, an $R$-semimodule $M$ is said to be \emph{fully coidempotent}
if every subsemimodule of $M$ is coidempotent.
\end{defn}

 \begin{prop}\label{t2.5}
Let $M$ be a fully coidempotent $R$-semimodule.
\begin{itemize}
  \item [(a)] $M$ is a comultiplication semimodule.
  \item [(b)] Every subsemimodule and every homomorphic image of $M$ is fully coidempotent.
  \item [(c)] $M$ is Hopfian.
\end{itemize}
\end{prop}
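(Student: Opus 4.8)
The plan is to distill from ``fully coidempotent'' one convenient working fact and then reduce all three parts to annihilator bookkeeping. The working fact: if $N$ is a coidempotent subsemimodule of an $R$-semimodule, then $N\subseteq(0:_MAnn_R(N))\subseteq(0:_MAnn_R^2(N))=N$, so in fact $N=(0:_MAnn_R(N))=(0:_MAnn_R^2(N))$. Part~(a) is then immediate: coidempotence of a subsemimodule $N$ of $M$ exhibits $N=(0:_M I)$ with $I=Ann_R^2(N)$, which is the defining property of a comultiplication semimodule.

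For the subsemimodule half of (b), given subsemimodules $L\subseteq N\subseteq M$, I would note that $L$ is a subsemimodule of $M$, so $L=(0:_MAnn_R^2(L))$, and that $(0:_NAnn_R^2(L))=N\cap(0:_MAnn_R^2(L))=N\cap L=L$; hence $N$ is fully coidempotent. For the homomorphic-image half, let $f:M\to M'$ be surjective, fix a subsemimodule $K'$ of $M'$, and set $K=f^{-1}(K')$, so that $Ker(f)\subseteq K$ and $f(K)=K'$. With $A=Ann_R(K)$, $B=Ann_R(Ker(f))$, $C=Ann_R(K')$, the first task is the annihilator chain: $C=(Ker(f):_RK)$ (since $rK'=0\iff f(rK)=0\iff rK\subseteq Ker(f)$), $A\subseteq C$, $A\subseteq B$ (as $Ker(f)\subseteq K$), and the key containment $BC\subseteq A$ (if $s\in B$ and $r\in C$ then $rK\subseteq Ker(f)$, so $srK\subseteq s\,Ker(f)=0$); hence $A^3\subseteq BC^2$. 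Surjectivity gives $(0:_{M'}C^2)=f\big((Ker(f):_MC^2)\big)$, so it is enough to show $(Ker(f):_MC^2)\subseteq K$: if $C^2m\subseteq Ker(f)=(0:_MB)$, then $BC^2m=0$, so $A^3m=0$, so $Am\subseteq(0:_MA^2)=K$, so $A^2m\subseteq AK=0$, i.e.\ $m\in(0:_MA^2)=K$. Combined with the trivial $K'\subseteq(0:_{M'}C^2)$ this gives $K'=(0:_{M'}Ann_R^2(K'))$, and $K'$ was arbitrary.

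For (c), take a surjective endomorphism $f:M\to M$ and put $K_1=Ker(f)\subseteq K_2=Ker(f^2)$, $A_i=Ann_R(K_i)$. Since $f(K_2)=K_1$, for $y\in K_2$ and $r,s\in A_1$ one gets $f(y)\in K_1$, hence $ry\in K_1$, hence $sry=0$; thus $A_1^2\subseteq A_2$, while $A_2\subseteq A_1$ is clear. Therefore $K_2=(0:_MA_2)\subseteq(0:_MA_1^2)=K_1$, so $Ker(f)=Ker(f^2)$. Finally a routine chase --- given $x\in Ker(f)$, write $x=f(y)$ by surjectivity, then $y\in Ker(f^2)=Ker(f)$ forces $x=f(y)=0$ --- yields $Ker(f)=0$, and since $f$ is onto, $f$ is an isomorphism; that is, $M$ is Hopfian.

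I expect the main obstacle to be the homomorphic-image part of (b): because $f$ need not be injective one must work throughout with preimages, and the argument turns entirely on the containment $BC\subseteq A$ together with the collapse $(0:_MAnn_R^2(N))=(0:_MAnn_R(N))$ for coidempotent $N$; once those are in place the remaining steps are formal. The one point genuinely needing care beyond that is the identity $(0:_{M'}I)=f\big((Ker(f):_MI)\big)$, valid for any ideal $I$ because $f$ is surjective, which is what lets the computation in $M'$ be transported back into $M$.
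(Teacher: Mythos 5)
Your proposal is correct, and on the only substantial point --- homomorphic images in part (b) --- it takes a genuinely different route from the paper. For (a) and for the subsemimodule half of (b) you and the paper do essentially the same (easy) thing, resting on the collapse $N=(0:_M Ann_R(N))=(0:_M Ann_R^2(N))$ for coidempotent $N$ and on $(0:_N I)=N\cap(0:_M I)$. For homomorphic images the paper only treats quotients $M/N$ by a subsemimodule: it writes a subsemimodule of $M/N$ as $K/N$ and computes $(0:_{M/N}Ann_R^2(K/N))=(0:_M Ann_R(N)Ann_R^2(K/N))/N\subseteq(0:_M Ann_R^3(K))/N=K/N$, using part (a). You instead handle an arbitrary surjection $f:M\rightarrow M'$ by pulling $K'$ back to $K=f^{-1}(K')$ and doing annihilator bookkeeping ($A\subseteq B$, $A\subseteq C$, hence $A^3\subseteq BC^2$), transported by $(0:_{M'}I)=f\bigl((Ker(f):_M I)\bigr)$; this is more general and in fact matches the literal wording ``every homomorphic image,'' since over a semiring a surjective image of $M$ need not be isomorphic to any Bourne quotient $M/N$ (e.g.\ $\Bbb Z_0^+\rightarrow\{0,1\}$ with $1+1=1$). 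Two small remarks: your advertised ``key containment'' $BC\subseteq A$ (and the identity $C=(Ker(f):_R K)$) is never actually used --- the chain only needs $A\subseteq B$, $A\subseteq C$ and coidempotence of $Ker(f)$ and $K$; and in (c) the difference is only packaging, since the paper gets $Ker(f)=0$ directly from $Ker(f)=(0:_M I)=(0:_M I^2)$ and surjectivity, while you first prove $Ker(f)=Ker(f^2)$ via $A_1^2\subseteq A_2$ and then run the standard chase. One caveat you share with the paper: both proofs pass from ``$f$ surjective with $Ker(f)=0$'' to ``$f$ is an isomorphism,'' which for semimodules needs an extra word, because zero kernel does not imply injectivity in general (the same map $\Bbb Z_0^+\rightarrow\{0,1\}$ is a witness); since the paper's own proof of (c) stops at $y=0$ ``as required,'' this is not a defect peculiar to your write-up, but it is the one spot to tighten if you want (c) airtight.
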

 \begin{proof}
(a) This is clear.

(b) It is easy to see that every subsemimodule of $M$ is
fully coidempotent. Now let $N$ be a
submodule of $M$ and $K/N$ be a subsemimodule of $M/N$. By part (a),
$M$ is a comultiplication $R$-semimodule. Hence $K=(0:_MAnn^2_R(K))$
implies that $K=(0:_MAnn^3(K))$. Thus
$$
(0:_{M/N}Ann^2(K/N))=(0:_MAnn_R(N)Ann^2_R(K/N))/N
$$
$$
\subseteq (0:_MAnn^3_R(K))/N=K/N.
$$
Therefore, $K/N =(0:_{M/N}Ann^2_R(K/N))$.

(c) Let $f:M \rightarrow M$ be
an epimorphism. Then by assumption and part (a),
$Ker(f)=(0:_MI)=(0:_MI^2)$, where $I=Ann_R(ker(f))$. If $y \in
Ker(f)$, then $y \in (0:_{f(M)}I)$ because $f$ is epic. Thus
$y=f(x)$ for some $x \in M$ and $f(x)I=0$. Hence $xI^2=0$. It
follows that $xI=0$. Therefore, $y =0$, as required.
\end{proof}

\begin{thm}\label{8lfff3.14}
Let $M$ be a fully coidempotent $R$-semimodule. Then every second subsemimodule of $M$ is a minimal subsemimodule of $M$.
\end{thm}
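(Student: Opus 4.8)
The plan is to fix a second subsemimodule $S$ of $M$ together with a \emph{nonzero} subsemimodule $K$ of $M$ with $K\subseteq S$, and to prove that necessarily $K=S$; this is exactly the statement that $S$ is a minimal subsemimodule. The whole argument rests on pushing the ``fully coidempotent'' hypothesis down to $S$ itself by means of Proposition~\ref{t2.5}: part (b) shows that $S$ is again a fully coidempotent $R$-semimodule, part (c) (more precisely, its proof) then shows that every surjective $R$-semimodule endomorphism of $S$ has zero kernel, and part (a) shows $M$ is a comultiplication semimodule, so that $N=(0:_M Ann_R(N))$ for every subsemimodule $N$ of $M$.

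The key step is to show that $Ann_R(K)=Ann_R(S)$. The inclusion $Ann_R(S)\subseteq Ann_R(K)$ is immediate from $K\subseteq S$. For the reverse inclusion, take $b\in Ann_R(K)$. Since $S$ is second, either $bS=0$ — in which case $b\in Ann_R(S)$ and there is nothing more to do — or $bS=S$. In the latter case multiplication by $b$ is an $R$-semimodule endomorphism $\mu_b\colon S\to S$ whose image is $bS$; and $bS=\{bx : x\in S\}$ since any sum $\sum bx_i$ equals $b\sum x_i$, so $\mu_b$ is surjective onto $S$. By the Hopfian conclusion of Proposition~\ref{t2.5}(c) applied to $S$, we get $Ker(\mu_b)=(0:_S b)=0$. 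But $bK=0$ forces $K\subseteq (0:_S b)=0$, contradicting $K\neq 0$. Hence $bS=0$ for every $b\in Ann_R(K)$, i.e. $Ann_R(K)\subseteq Ann_R(S)$, and equality holds.

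Finally, using that $M$ is a comultiplication $R$-semimodule, $K=(0:_M Ann_R(K))=(0:_M Ann_R(S))=S$. Thus no subsemimodule lies strictly between $0$ and $S$, so $S$ is a minimal subsemimodule of $M$.

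The only paragraph with real content is the second one, and the expected obstacle is precisely the point where the equation ``$bS=S$'' has to be converted into usable information: this cannot be done from second-ness alone (for instance $\mathbb{Z}_{p^\infty}$ is a second $\mathbb{Z}$-module that is not minimal, and — consistently — not fully coidempotent), and it is exactly here that the fully coidempotent hypothesis, funnelled through Proposition~\ref{t2.5}(c), does the work. A small technical care point worth isolating is that the notion invoked in Proposition~\ref{t2.5}(c) delivers only $Ker(\mu_b)=0$ rather than injectivity of $\mu_b$; but $Ker(\mu_b)=(0:_S b)$, so vanishing of the kernel is all that is needed.
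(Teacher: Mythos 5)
Your proposal is correct and follows essentially the same route as the paper's own proof: both reduce to comparing $Ann_R(K)$ with $Ann_R(S)$, use Proposition~\ref{t2.5}(a) (comultiplication) to convert the annihilator comparison into a containment of subsemimodules, and use parts (b) and (c) to make multiplication by an element $b$ with $bS=S$ a surjective endomorphism of $S$ with zero kernel, killing $K$. Your remark that only $Ker(\mu_b)=0$ (rather than injectivity) is needed is in fact slightly more careful than the paper's phrasing ``the epimorphism $r\colon S\to S$ is an isomorphism,'' but it is the same argument.
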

\begin{proof}
 Let $S$ be a second subsemimodule of $M$ and $K$ be a
subsemimodule of $S$. If $Ann_R(K) \subseteq Ann_R(S)$, then $S
\subseteq K$ because $M$ is a comultiplication $R$-semimodule
by Proposition \ref{t2.5}
(a). If $Ann_R(K) \not \subseteq
Ann_R(S)$, then there exists $r \in Ann_R(K)-Ann_R(S)$. Since $S$ is
second $rS=S$. By Proposition \ref{t2.5} (b), $S$ is
fully coidempotent. Hence by Proposition \ref{t2.5} (c), $S$ is Hopfian.
It follows that the epimorphism $r: S
\rightarrow S$ is an isomorphism. Hence $rK=0$ implies that $K=0$, as
required.
\end{proof}

\begin{lem}\label{2.98}
Let $R = R_1 \times R_2$, where $R_j$ is a commutative semiring for all
$j \in \{1, 2\}$ and $J_1$ is an ideal of $R_1$. If $J_1$ is a prime ideal of $R_1$, then  $J_1 \times  R_2$ is a prime ideal of $R$.
\end{lem}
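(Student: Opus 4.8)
The plan is to verify directly, working coordinatewise, that $J_1 \times R_2$ satisfies the definition of a prime ideal of $R = R_1 \times R_2$: namely, that it is a proper ideal with the property that a product lying in it forces one of the factors to lie in it.

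First I would confirm that $J_1 \times R_2$ is an ideal of $R$. Since addition in $R$ is componentwise, closure under $+$ follows from $J_1$ being closed under $+$ in $R_1$; and for $(r_1, r_2) \in R$ and $(j, s) \in J_1 \times R_2$ we have $(r_1, r_2)(j, s) = (r_1 j, r_2 s)$ with $r_1 j \in J_1$ because $J_1$ is an ideal of $R_1$, and $r_2 s \in R_2$ automatically. Next, properness: since $J_1$ is prime it is by definition a proper ideal, so $J_1$ is a proper subset of $R_1$ and hence $J_1 \times R_2 \neq R_1 \times R_2 = R$ (indeed $(1,1) \notin J_1 \times R_2$ since $1 \notin J_1$).

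For the prime condition, take $(a_1, a_2), (b_1, b_2) \in R$ with $(a_1, a_2)(b_1, b_2) = (a_1 b_1, a_2 b_2) \in J_1 \times R_2$. The only constraint this imposes is $a_1 b_1 \in J_1$, the second-coordinate requirement $a_2 b_2 \in R_2$ being vacuous. Primeness of $J_1$ in $R_1$ then yields $a_1 \in J_1$ or $b_1 \in J_1$, that is, $(a_1, a_2) \in J_1 \times R_2$ or $(b_1, b_2) \in J_1 \times R_2$, which is exactly what is needed.

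I do not anticipate any genuine obstacle here: the argument is a routine componentwise check. The only point demanding a moment's attention is recalling the definition of a prime ideal of a commutative semiring (a proper ideal $I$ with $ab \in I$ implying $a \in I$ or $b \in I$, consistent with the prime-subsemimodule definition applied to $R$ as an $R$-semimodule), and observing that having $R_2$ as the entire second factor makes the second-coordinate condition trivial throughout.
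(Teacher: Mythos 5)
Your proof is correct and follows the same componentwise argument as the paper: a product $(a_1,a_2)(b_1,b_2)$ landing in $J_1\times R_2$ constrains only the first coordinate, so primeness of $J_1$ finishes the job. You are in fact slightly more careful than the paper, which skips the explicit checks that $J_1\times R_2$ is a proper ideal.
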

\begin{proof}
Let $J_1$ be a prime ideal of $R_1$ and $(a, b)(c,d) \in  J_1 \times R_2$. Then $ac \in  J_1$. Since $J_1$ is a prime ideal of $R_1$, $a \in J_1$ or $c \in J_1$. Thus $(a, b) \in J_1\times R_2$ or $(c,d)\in J_1 \times  R_2$. Hence,  $J_1\times R_2$ is a prime ideal of $R$. The proof of the converse is trivial.
\end{proof}

\begin{thm}\label{2.998}
 Let $R = R_1 \times R_2$, where $R_j$ is a commutative semiring for all
$j \in \{1, 2\}$ and $J_1$ is an ideal of $R_1$ and $J_2$ is an ideal of $R_2$ such that $J = J_1 \times  J_2$ is
an ideal of $R$. Then the following statements are equivalent:
\begin{itemize}
\item [(a)] $J$ is a prime ideal of $R$;
\item [(b)] $J=J_1 \times R_2$ for some prime ideal $J_1$ of $R_1$ or  $J=R_1 \times J_2$ for some prime ideal $J_2$ of $R_2$.
\end{itemize}
\end{thm}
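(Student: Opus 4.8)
The plan is to establish the two implications separately. The direction $(b)\Rightarrow(a)$ is almost immediate: if $J=J_1\times R_2$ with $J_1$ a prime ideal of $R_1$, then $J$ is a prime ideal of $R$ by Lemma \ref{2.98}; and if $J=R_1\times J_2$ with $J_2$ prime in $R_2$, the same conclusion follows from the coordinate-symmetric version of Lemma \ref{2.98} (its proof goes through verbatim after swapping the two factors). So the substance of the theorem is the implication $(a)\Rightarrow(b)$.

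For $(a)\Rightarrow(b)$, assume $J=J_1\times J_2$ is a prime ideal of $R$. The key first step is to feed the idempotent decomposition of $1\in R$ into the primeness hypothesis: since every ideal of a semiring contains $0$, we have $(1_{R_1},0_{R_2})\,(0_{R_1},1_{R_2})=(0_{R_1},0_{R_2})\in J$, so primeness forces $(1_{R_1},0_{R_2})\in J$ or $(0_{R_1},1_{R_2})\in J$. In the first case $1_{R_1}\in J_1$, hence $J_1=R_1$ and $J=R_1\times J_2$; in the second case $J=J_1\times R_2$. The two cases being symmetric, I would treat $J=R_1\times J_2$. Here $J_2$ is a proper ideal of $R_2$ because $J$ is proper, and if $cd\in J_2$ with $c,d\in R_2$ then $(1_{R_1},c)(1_{R_1},d)=(1_{R_1},cd)\in R_1\times J_2=J$, so primeness of $J$ gives $(1_{R_1},c)\in J$ or $(1_{R_1},d)\in J$, i.e.\ $c\in J_2$ or $d\in J_2$; thus $J_2$ is prime, as required.

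I do not expect a genuine obstacle here; the only points demanding care are bookkeeping ones — confirming that the definition of prime ideal in force permits the elementwise multiplication argument, and using the properness of $J$ to guarantee that the surviving factor ($J_2$ above, or $J_1$ in the symmetric case) is a proper ideal and hence eligible to be called prime. Everything else is a direct transcription of the classical commutative-ring argument into the semiring setting, using only that ideals are closed under addition and under multiplication by arbitrary semiring elements.
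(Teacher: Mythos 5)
Your proposal is correct and follows essentially the same route as the paper: both use the orthogonal pair $(1_{R_1},0_{R_2})(0_{R_1},1_{R_2})=(0,0)\in J$ to force one coordinate of $J$ to be the whole semiring, and then verify primeness of the remaining coordinate via products of the form $(x,1_{R_2})(y,1_{R_2})$ (the paper argues this step by contradiction in the case $J=J_1\times R_2$, you argue directly in the symmetric case), with $(b)\Rightarrow(a)$ handled by Lemma \ref{2.98} in both. Your explicit attention to properness of the surviving factor is a minor tidying of what the paper leaves implicit.
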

\begin{proof}
$(a) \Rightarrow (b)$
Let $J=J_1 \times J_2$ be a prime ideal of $R$, where $J_1$ is an ideal of $R_1$ and $J_2$ is
an ideal of $R_2$. Then $(0, 1)(1,0) \in  J_1 \times J_2=J$ implies that $(1,0) \in  J_1 \times J_2$ or $(0, 1) \in  J_1 \times J_2$. Therefore,  $J_1=R$ or $J_2=R$. Thus $J=R_1 \times J_2$ or $J=J_1 \times R_2$.
Assume that $J = J_1 \times R_2$ for some proper ideal $J_1$ of $R_1$. Now, we
show that $J_1$ is a prime ideal.  Assume contrary that $J_1$ is not a prime ideal
of $R_1$. Then there exist elements $x, y \in R_1$ such that $xy \in J_1$ but neither $x \in J_1$
nor $y \in J_1$. Thus, $(x, 1_{R_2})(y, 1_{R_2}) \in J_1 \times  R_2$ implies that $(x, 1_{R_2}) \in J_1 \times R_2$ or $(y, 1_{R_2}) \in J_1 \times R_2 $. Consequently, $x \in J_1$  or $y \in J_1$, which gives a contradiction. Hence, $J_1$ is a prime ideal of $R_1$.

$(a) \Rightarrow (b)$
This follows from Lemm \ref{2.98}.
\end{proof}

\begin{lem}\label{2.9}
Let $R_i$ be a commutative semiring with identity $1_{R_i}$ and $M_i$ be a faithful $R_i$-semimodule, for $i = 1, 2$.
Let $R = R_1\times R_2$, $M = M_1 \times M_2$. Suppose that $S_i$ is a subsemimodule of $M_i$ for
$i = 1, 2$ such that $S = S_1\times S_2$ is a subsemimodule of $M$. Then the followings are equivalent:
\begin{itemize}
\item [(a)] $S$ is a second subsemimodule of $M$;
\item [(b)] $S_1$ is a second subsemimodule of $M_1$ and $S_2 = 0$ or $S_1 = 0$ and $S_2$ is a second
subsemimodule of $M_2$.
\end{itemize}
\end{lem}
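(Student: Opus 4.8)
The plan is to reduce everything to the product structure of the scalar semiring. I would first record the elementary identities: every element of $R$ has the form $a=(a_1,a_2)$ with $a_i\in R_i$, for a ``rectangular'' subsemimodule $S=S_1\times S_2$ one has $aS=a_1S_1\times a_2S_2$ directly from the semimodule axioms, and $0\cdot S_i=0$; in particular the idempotents $e_1=(1_{R_1},0_{R_2})$ and $e_2=(0_{R_1},1_{R_2})$ satisfy $e_1S=S_1\times 0$ and $e_2S=0\times S_2$. (An alternative for the ``splitting'' step would be to use Proposition \ref{2.2}(a), which makes $Ann_R(S)=Ann_{R_1}(S_1)\times Ann_{R_2}(S_2)$ a prime ideal, and then invoke Theorem \ref{2.998}; but primeness of the annihilator does not by itself give second-ness, as the example after Proposition \ref{2.2} shows, so I prefer the self-contained argument below.)

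For $(b)\Rightarrow(a)$ I would assume, say, that $S_1$ is second in $M_1$ and $S_2=0$. Then $S=S_1\times 0$ with $S_1\neq 0$, so $S\neq 0$, and for $a=(a_1,a_2)\in R$ we get $aS=a_1S_1\times 0$, which is $S$ or $0$ according as $a_1S_1$ is $S_1$ or $0$; the case $S_1=0$, $S_2$ second is symmetric. This direction is routine.

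For $(a)\Rightarrow(b)$ I would take $S=S_1\times S_2$ second in $M$, so $S\neq 0$ and $S_1,S_2$ are not both zero, and apply second-ness to $e_1$: the subsemimodule $e_1S=S_1\times 0$ must equal $S$ or $0$, forcing $S_2=0$ or $S_1=0$ respectively. Assuming $S_2=0$ (the other case being symmetric), $S=S_1\times 0$ with $S_1\neq 0$, and for every $a_1\in R_1$ the scalar $(a_1,0_{R_2})$ yields $(a_1,0_{R_2})S=a_1S_1\times 0\in\{S,0\}$, hence $a_1S_1\in\{S_1,0\}$, so $S_1$ is second in $M_1$. That lands us in case (b).

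The step that needs a moment's care --- the only real ``obstacle'' here --- is checking that the two outcomes of applying $e_1$ are genuinely exclusive once $S$ is second: if both $S_1\neq 0$ and $S_2\neq 0$, then $e_1S=S_1\times 0$ is neither $S$ (since $S_2\neq 0$) nor $0$ (since $S_1\neq 0$), a contradiction; so the disjunction in (b) is forced, not merely permitted. The faithfulness hypothesis on $M_1,M_2$ is used only to rule out the degenerate possibility $M_i=0$; otherwise the proof needs nothing beyond the identities above, plus keeping the distinction between ``$S_i=0$'' and ``$S_i=\{0\}$'' straight.
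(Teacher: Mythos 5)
Your proof is correct, and for the direction $(a)\Rightarrow(b)$ it takes a genuinely different route from the paper. The paper splits the product by first noting that $Ann_R(S)=Ann_{R_1}(S_1)\times Ann_{R_2}(S_2)$ is a prime ideal of $R_1\times R_2$ (Proposition \ref{2.2}(a)) and then invoking Theorem \ref{2.998} to conclude $Ann_{R_1}(S_1)=R_1$ or $Ann_{R_2}(S_2)=R_2$, i.e.\ $S_1=0$ or $S_2=0$; after that it checks second-ness of the surviving factor with scalars of the form $(0,a_2)$, exactly as you do, and its $(b)\Rightarrow(a)$ direction coincides with yours. Your splitting step instead evaluates second-ness at the idempotent $e_1=(1_{R_1},0_{R_2})$: $e_1S=S_1\times 0$ must be $S$ or $0$, which forces $S_2=0$ or $S_1=0$ at once. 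This is more elementary and self-contained (no appeal to Proposition \ref{2.2} or to the classification of prime ideals of a product), and it makes transparent that the disjunction in (b) is exclusive when both $S_i$ are non-zero; it also makes explicit that the faithfulness hypothesis is essentially unused, which is equally true of the paper's argument. What the paper's route buys is that it exercises the annihilator machinery (Proposition \ref{2.2}(a) and Theorem \ref{2.998}) that the rest of the section is built on. One small remark: your parenthetical reason for avoiding the annihilator route --- that primeness of $Ann_R(S)$ does not imply second-ness --- is not quite the relevant point, since the paper uses primeness only to force $S_1=0$ or $S_2=0$ and then verifies second-ness of the remaining factor directly, just as you do; so that route is equally sound, merely less direct.
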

\begin{proof}
$(a)\Rightarrow (b)$.
Clearly, $M$ is a faithful $R$-module. By Proposition \ref{2.2}, $Ann_R(S)$ is a prime ideal of $R$. Thus we have either $Ann_{R_1}(S_1) = R_1$ or $Ann_{R_2}(S_2)= R_2$ by Theorem \ref{2.998}. So we can assume that $Ann_{R_1}(S_1) = R_1$. Then $S_1=0$. Now we prove that $S_2$ is a second subsemimodule of $M_2$. To see this, let $a_2 \in R_2$. Then by assumption $(0, a_2)S=0$ or $(0, a_2)S=S$.
Thus we have $a_2S_2=0$ or $a_2S_2=S_2$, as needed.

$(b)\Rightarrow (a)$.
Assume that $S_2= 0$ and $S_1$ is a second subsemimodule of $M_1$. We show that $S$ is a second subsemimodule of $M$. So let $(a_1,a_2) \in  R_1\times R_2$. Then $a_1S_1=0$ or $a_1S_1=S_1$. Thus
$(a_1, a_2)(S_1\times 0)=0$ or $(a_1, a_2)(S_1\times 0)=S_1\times 0$.
Hence, $S$ is a second subsemimodule of $M$.
\end{proof}

\begin{thm}\label{2.10}
Let $R_i$ be a commutative semiring with identity $1_{R_i}$ and $M_i$ be a faithful $R_i$-semimodule, for $i = 1, 2, \ldots, n$ where $n \geq 2$.
Let $R=R_1\times R_2\times\cdots \times R_n$, $M=M_1\times M_2\times \cdots \times M_n$, and $S = S_1\times S_2\times\cdots \times S_n$,
where $S_i$ is a subsemimodule of $M_i$, $1 \leq i \leq n$. Then the followings are equivalent:
\begin{itemize}
\item [(a)] $S$ is a second subsemimodule of $M$;
\item [(b)] $S_j$ is a second subsemimodule of $M_j$ for some $j \in \{1,2,\ldots ,n\}$ and $S_i = 0$ for
each $i \not= j$.
\end{itemize}
\end{thm}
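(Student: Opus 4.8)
The plan is to prove this by induction on $n$, with the base case $n=2$ being exactly Lemma \ref{2.9}. For the inductive step I would decompose the product into its first coordinate and the rest: write $R = R_1 \times R'$ with $R' = R_2 \times \cdots \times R_n$, and likewise $M = M_1 \times M'$ and $S = S_1 \times S'$, where $M' = M_2 \times \cdots \times M_n$ and $S' = S_2 \times \cdots \times S_n$. The idea is then to apply Lemma \ref{2.9} to the two-factor decomposition $R = R_1 \times R'$ and feed the "$S'$ second in $M'$" alternative into the induction hypothesis.

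Before invoking Lemma \ref{2.9} I would check that its hypotheses survive the truncation. First, $M'$ is a faithful $R'$-semimodule: if $(r_2,\ldots,r_n)M' = 0$, then evaluating on elements of the form $(0,\ldots,m_i,\ldots,0)$ gives $r_i M_i = 0$, whence $r_i = 0$ since $M_i$ is faithful. Second, $S'$ is a subsemimodule of $M'$ and $S = S_1 \times S'$ is a subsemimodule of $M = M_1 \times M'$, since a direct product of subsemimodules is again a subsemimodule. With these in hand, Lemma \ref{2.9} applies to $R = R_1 \times R'$, $M = M_1 \times M'$, $S = S_1 \times S'$.

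Now Lemma \ref{2.9} says $S$ is a second subsemimodule of $M$ if and only if either ($S_1$ is second in $M_1$ and $S' = 0$) or ($S_1 = 0$ and $S'$ is second in $M'$). In the first case, $S' = 0$ means $S_i = 0$ for all $2 \le i \le n$, which is statement (b) with $j = 1$. In the second case, apply the induction hypothesis to $R'$, $M'$, $S'$: $S'$ is second in $M'$ iff $S_j$ is second in $M_j$ for some $j \in \{2,\ldots,n\}$ and $S_i = 0$ for all $i \in \{2,\ldots,n\}$ with $i \ne j$; combined with $S_1 = 0$ this is exactly (b). For the converse, if (b) holds with $j = 1$ then $S' = 0$ and $S_1$ is second, so Lemma \ref{2.9} gives that $S$ is second; if (b) holds with $j \ge 2$ then $S_1 = 0$ and $S'$ satisfies condition (b) of the inductive hypothesis, hence $S'$ is second in $M'$, and again Lemma \ref{2.9} gives that $S$ is second.

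I expect no genuine obstacle here: the argument is essentially bookkeeping over index sets, and the only points that need a line of justification are the inheritance of faithfulness and of the subsemimodule property by the truncated factor $M'$, $S'$, so that both Lemma \ref{2.9} and the induction hypothesis legitimately apply. The mild care point is to keep the roles of the single index $j$ and the "everything else is zero" condition consistent when passing between the two-factor splitting and the full $n$-factor statement.
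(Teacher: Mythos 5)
Your proposal is correct and follows essentially the same route as the paper: induction on $n$ with Lemma \ref{2.9} handling both the base case and the two-factor splitting in the inductive step, the only cosmetic difference being that you split off the first factor while the paper splits off the last. Your explicit check that faithfulness and the subsemimodule property pass to the truncated product is a welcome detail that the paper leaves implicit.
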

\begin{proof}
We use induction on $n$. By Lemma \ref{2.9}, the claim is true if $n = 2$. So, suppose that the claim is true for each $k  \leq n-1$ and let $k = n$. Put $Q= S_1\times S_2\times\cdots \times S_{n-1}$, $\acute{R} = R_1\times R_2\times \cdots \times R_{n-1}$,
and $\acute{M} = M_1\times M_2\times\cdots \times M_{n-1}$, by Lemma \ref{2.9}, $S = Q\times S_n$ is a second subsemimodule of
$M = \acute{M} \times M_n$ if and only if $Q$ is a second subsemimodule of $\acute{M}$ and $S_n = 0$ or $Q=0$ and
$S_n$ is a second subsemimodule of $M_n$. Now the rest follows from induction hypothesis.
\end{proof}

An $R$-semimodule $M$ is said to be \textit{Noetherian} if $M$ satisfies the $ACC$ on its $R$-subsemimodules. Also, $M$ is said to be \textit{Artinian} if $M$ satisfies the $DCC$ on its $R$-subsemimodules \cite{Abu07}.

An $R$-semimodule $M$ is said to be \textit{simple} if it has no proper subsemimodules. Also,  $M$ is said to be \textit{semisimple} if it
is a direct sum of its simple subsemimodules \cite{MR2536767}.

\begin{prop}\label{l2.9}
Let $M$ be a finitely generated second $R$-semimodule. Then $Ann_R(M)$ is a maximal ideal of $R$.
\end{prop}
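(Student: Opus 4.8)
The plan is to upgrade ``prime'' to ``maximal''. By Proposition~\ref{2.2}(a) the ideal $\mathfrak{p}:=Ann_R(M)$ is a prime $k$-ideal of $R$, and it is proper since $M\neq 0$. So it suffices to prove that for every $a\in R\setminus\mathfrak{p}$ one has $Ra+\mathfrak{p}=R$; then no proper ideal of $R$ can properly contain $\mathfrak{p}$, which is precisely maximality. Equivalently, one checks that every element of $R$ outside $\mathfrak{p}$ becomes a unit in $R/\mathfrak{p}$, i.e. that $R/\mathfrak{p}$ is a semifield.

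First I would exploit secondness: if $a\notin\mathfrak{p}=Ann_R(M)$ then $aM\neq 0$, hence $aM=M$ because $M$ is second. Now bring in finite generation: write $M=Rm_1+\cdots+Rm_n$. Since $aM=\{am:m\in M\}$, each generator can be written $m_i=a\sum_j r_{ij}m_j=\sum_j (ar_{ij})m_j$ for suitable $r_{ij}\in R$. Putting $T=(ar_{ij})$ and $\vec m=(m_1,\dots,m_n)^{t}$, this reads $\vec m=T\vec m$, whence $T^{k}\vec m=\vec m$ for all $k\geq 0$ and therefore $p(T)\vec m=p(1)\vec m$ for every polynomial $p$ with coefficients in $R$. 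Applying the Cayley--Hamilton identity over a commutative semiring to $T$ (the positive and negative parts of its characteristic polynomial satisfy $\chi_T^{+}(T)=\chi_T^{-}(T)$) and evaluating on $\vec m$ gives $\chi_T^{+}(1)\,m=\chi_T^{-}(1)\,m$ for every $m\in M$. Since all entries of $T$ lie in the ideal $Ra$, one gets $\chi_T^{+}(1)=1+\alpha$ and $\chi_T^{-}(1)=\beta$ with $\alpha,\beta\in Ra$; thus $(1+\alpha)m=\beta m$ for all $m\in M$.

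The decisive step is to deduce $1\in Ra+\mathfrak{p}$ from the fact that $1+\alpha$ and $\beta$ induce the same action on $M$, and this is where the absence of additive inverses makes things delicate: over a ring one would simply say $1+\alpha-\beta\in Ann_R(M)=\mathfrak{p}$ and conclude $1\in Ra+\mathfrak{p}$, but no such subtraction is available here. The way I would close the gap is to invoke the finitely generated semiring form of Nakayama's lemma (which is built on exactly the determinant computation above) to promote the equality of actions to the existence of an element of $Ann_R(M)$ congruent to $1$ modulo $Ra$; subtractivity of the $k$-ideal $\mathfrak{p}$ then forces $1\in Ra+\mathfrak{p}$, and the argument closes. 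I expect this last passage --- transferring an equality of module actions to a genuine relation among scalars without subtracting --- to be the main obstacle, and it is the place where finite generation of $M$ (together with $\mathfrak{p}=Ann_R(M)$ being a $k$-ideal, supplied by Proposition~\ref{2.2}(a)) is really used; if instead one takes the shorter route, it amounts to the same assertion: $aM=M$ with $M$ finitely generated makes $a$ a unit modulo $Ann_R(M)$, so $R/Ann_R(M)$ is a semifield and $Ann_R(M)$ is maximal.
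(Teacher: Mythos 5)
Your reduction (``$Ann_R(M)$ is a proper prime $k$-ideal by Proposition~\ref{2.2}(a), so it suffices to show $Ra+Ann_R(M)=R$ for every $a\notin Ann_R(M)$'') is fine, and over a ring the determinant trick would indeed finish the job. But the step you yourself flag as the main obstacle is a genuine gap, not a technicality that a citation can absorb: there is no ``finitely generated semiring form of Nakayama's lemma'' that converts the equality of actions $(1+\alpha)m=\beta m$ (for all $m\in M$, with $\alpha,\beta\in Ra$) into an element of $Ann_R(M)$ congruent to $1$ modulo $Ra$. Equality of two scalar actions on $M$ is a congruence on $R$ which, in the absence of additive cancellation in $M$, is in general strictly coarser than anything expressible through the ideal $Ann_R(M)$; subtractivity of $Ann_R(M)$ only lets you cancel once you already have an additive relation among scalars, and your computation never produces one. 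Worse, the intermediate claim you reduce to --- $aM=M$ with $M$ finitely generated forces $Ra+Ann_R(M)=R$ --- is simply false over semirings: take $R=\Bbb Z_0^+$ and $M=\{0,u\}$ with $u+u=u$ and $nu=u$ for all $n\geq 1$. Then $M$ is cyclic and second, $2M=M$, $Ann_R(M)=0$ is a prime $k$-ideal, yet $R\cdot 2+Ann_R(M)=2\Bbb Z_0^+\neq R$. So the Cayley--Hamilton/Nakayama route cannot be completed as written; this is exactly the kind of step that dies without additive inverses (it could be salvaged only under extra hypotheses such as cancellativity of $M$, which are not assumed here).

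The paper's own proof goes a different way entirely and never touches a determinant argument: finite generation is used only to produce a maximal subsemimodule $U$ of $M$ (citing \cite[Proposition 2.1]{MR2536767}); secondness then forces $Ann_R(M)=Ann_R(M/U)$, since any $r$ with $rM\subseteq U$ and $rM\neq 0$ would give $rM=M\subseteq U$; and maximality is read off from the simple quotient $M/U$. You should also note that the two-element example above satisfies all the stated hypotheses of the proposition while $Ann_R(M)=0$ fails to be maximal in $\Bbb Z_0^+$, so the passage from a simple (quotient) semimodule to a maximal annihilator ideal is itself delicate over semirings and seems to require an additional assumption (for instance cancellativity or subtractivity conditions on $M$); in any case, within the present framework your concluding step is not merely unproved but false, and the argument has to be rebuilt along the lines of the maximal-subsemimodule approach rather than patched by an appeal to Nakayama.
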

\begin{proof}
Since $M$ is second, $M \not =0$. There exists a maximal submodule $U$ of $M$  because $M$ is finitely generated by \cite[Proposition 2.1]{MR2536767}.  Clearly, $Ann_R(M)\subseteq Ann_R(M/U)$.  If $Ann_R(M/U)\not \subseteq Ann_R(M)$, then there is $r\in R$ such that $rM\subseteq U$ and $rM\not=0$. This implies that $rM=M$ since $M$ is second. Therefore, $U=M$ which is a contradiction since $U$ is maximal. Thus  $Ann_R(M)=Ann_R(M/U)$. Now, as $M/U$ is simple, $Ann_R(M)=Ann_R(M/U)$ is a maximal ideal of $R$. 
\end{proof}

\begin{defn}\label{d3.5}
 Let $N$ be a subsemimodule of an $R$-semimodule $M$. We define the \emph{socle} of $N$ as the sum of all second subsemimodules of $M$ contained in $N$ and it is denoted by $sec(N)$.
In case $N$ does not contain any second subsemimodule, the socle of $N$ is defined to be \textbf{$(0)$}. Also, we say that $N \not =0$ is a \emph{socle subsemimodule of $M$} if $sec(N)=N$.
\end{defn}

\begin{thm} \label{t3.8}
Let $M$ be a Noetherian $R$-semimodule.
Then there is a second subsemimodule of $M$ which contains each socle subsemimodule of $M$.
\end{thm}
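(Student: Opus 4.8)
The plan is to exhibit the required subsemimodule as $sec(M)$ itself and to show that the Noetherian hypothesis forces $sec(M)$ to be second. Granting that, the theorem is immediate: if $N$ is any socle subsemimodule of $M$, then by Definition \ref{d3.5} $N=sec(N)$ is a sum of second subsemimodules of $M$ contained in $N$, each of which is \emph{a fortiori} contained in $sec(M)$; hence $N\subseteq sec(M)$, and $sec(M)$ is then a second subsemimodule of $M$ containing every socle subsemimodule. (If $M$ has no second subsemimodule, so $sec(M)=0$, the statement degenerates, so I assume $sec(M)\neq 0$.)

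To prove that $sec(M)$ is second I would proceed in three steps. First, replace the a priori infinite sum defining $sec(M)$ by a finite one: since $M$ is Noetherian the subsemimodule $sec(M)$ is finitely generated, each generator lies in a sum of finitely many second subsemimodules of $M$, and collecting these gives second subsemimodules $S_{1},\dots ,S_{n}$ of $M$ with $sec(M)=S_{1}+\cdots +S_{n}$. Second, by Proposition \ref{2.2}(a) each $\mathfrak p_{i}:=Ann_{R}(S_{i})$ is a prime $k$-ideal of $R$; grouping those $S_{i}$ that share an annihilator and applying Proposition \ref{df2.9}(a) to each group, I may assume $\mathfrak p_{1},\dots ,\mathfrak p_{n}$ are pairwise distinct. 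Third, it remains to verify that for every $a\in R$ one has $a\,sec(M)=sec(M)$ or $a\,sec(M)=0$; writing $a\,sec(M)=aS_{1}+\cdots +aS_{n}$ with each $aS_{i}\in\{S_{i},0\}$, the cases ``all $aS_{i}=0$'' and ``all $aS_{i}=S_{i}$'' are fine, so the task reduces to ruling out the mixed case, i.e. to showing that $aS_{i_{0}}=S_{i_{0}}$ for one index forces $aS_{i}=S_{i}$ for all indices.

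The main obstacle is exactly this last implication, and it is a real one: a finite sum of second subsemimodules with \emph{distinct} prime annihilators need not be second. Already the ring-theoretic prototype fails — taking $R=\Bbb Z_{0}^{+}$ and $M=\Bbb Z_{6}$, the second subsemimodules of $M$ are $\{\bar 0,\bar 3\}$ and $\{\bar 0,\bar 2,\bar 4\}$, so $sec(M)=M$, which is not second (for instance $\bar 2 M=\{\bar 0,\bar 2,\bar 4\}\neq 0,M$); consequently no second subsemimodule of $M$ contains the socle subsemimodule $M$. Thus the argument cannot be pushed through as stated. To obtain a true statement one must either weaken the conclusion — e.g. take $S=\sum\{T : T \text{ is }\mathfrak p\text{-second in }M\}$ for a single, carefully chosen prime $\mathfrak p$, which is $\mathfrak p$-second by Proposition \ref{df2.9}(a) but will in general miss second subsemimodules belonging to other primes — or impose a hypothesis forcing $\mathfrak p_{1},\dots ,\mathfrak p_{n}$ to coincide (for instance that $Ann_{R}(M)$ has a unique minimal prime). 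I would therefore carry out the finite reduction and the grouping above and prove the theorem in one of these corrected forms, while flagging that the unrestricted statement is false.
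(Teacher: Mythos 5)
Your diagnosis is essentially correct, and the obstruction you isolate is real: with $R=\Bbb Z_0^+$ and $M=\Bbb Z_6$ (a finite, hence Noetherian, $R$-semimodule) the only second subsemimodules are $\{\bar 0,\bar 3\}$ and $\{\bar 0,\bar 2,\bar 4\}$, so $sec(M)=M$ is a socle subsemimodule while $M$ itself is not second (e.g.\ $\bar 2M=\{\bar 0,\bar 2,\bar 4\}\neq 0,M$); hence no second subsemimodule of $M$ contains every socle subsemimodule, and the theorem as literally stated fails. It is worth comparing this with what the paper's own argument actually delivers. The paper lets $\Sigma$ be the family of all finite sums of second subsemimodules, uses the Noetherian hypothesis to choose a maximal member $S=S_1+\cdots+S_n$ of $\Sigma$, and then, for an arbitrary second subsemimodule $L$, notes that $S\subseteq L+S_1+\cdots+S_n\in\Sigma$, so maximality forces $L\subseteq S$. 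This shows that $S$ contains every second, hence every socle, subsemimodule of $M$ --- but nowhere is it shown that $S$ is itself second, and by your example it cannot be (there $S=M$). So the paper's proof in fact establishes only the weaker assertion that $sec(M)$ is a finite sum of second subsemimodules and is the largest socle subsemimodule of $M$; the conclusion should be read with ``socle'' in place of ``second'' (or restated in that form), which is precisely the kind of repair you propose.

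On the technical side, your finite-reduction step (Noetherian implies $sec(M)$ is finitely generated, hence a finite sum of second subsemimodules) is sound and runs parallel to the paper's choice of a maximal element of $\Sigma$. Note, however, that the paper's maximality trick yields the containment of every second subsemimodule in $S$ directly, with no need to group the $S_i$ by their annihilators or to confront secondness of the sum at all; if you restate the theorem in the corrected form, adopting that step shortens your argument. Your alternative fix via a single prime $\mathfrak p$ and Proposition \ref{df2.9}(a) is legitimate but, as you acknowledge, proves a genuinely different (and weaker, in a different direction) statement.
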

\begin{proof}
If $M$ does not contain any second subsemimodules, then the
results is true vacuously. So we assume that $M$ contains a second
subsemimodule. Let $\sum$ be the set of all subsemimodules of $M$ which
can be expressed as a sum of a finite number of second subsemimodules.
Since $M$ is Noetherian, $\sum$ has a maximal member, $S$ say.
Hence, there exist second subsemimodules $S_1, S_2, ...,S_n$ of $M$
such that $S=S_1+S_2+...+S_n$. Let $L$ be any second subsemimodule of
$M$. Then
$$
S=S_1+S_2+...+S_n \subseteq L+S_1+S_2+...+S_n \in \sum.
$$
By the maximality of $S$, we have $S=L+S_1+S_2+...+S_n$. Hence, $L \subseteq S$. Thus,  $S$ contains each socle subsemimodule of $M$. 
\end{proof}

\begin{defn}
Let $M$ be an $R$-semimodule. We say that a second subsemimodule $N$ of $M$ is a \emph{maximal second subsemimodule} of a subsemimodule $K$ of $M$, if $N \subseteq K$ and there does not exist a second
subsemimodule $L$ of $M$ such that $N \subset L \subset K$.
\end{defn}

\begin{thm}\label{t3.6}
Let $M$ be an $R$-semimodule. If $M$ satisfies
the descending chain condition on socle subsemimodules, then every
non-zero subsemimodule of $M$ has only a finite number of maximal
second subsemimodules.
\end{thm}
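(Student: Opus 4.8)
Assume, for a contradiction, that some non-zero subsemimodule $K$ of $M$ has infinitely many maximal second subsemimodules. Every second subsemimodule of $M$ contained in $K$ already lies in $sec(K)$, so I may replace $K$ by $sec(K)$: if $sec(K)$ happens to be second it is the unique maximal second subsemimodule of $K$ and there is nothing to prove, so from now on $K=sec(K)$ is a socle subsemimodule and its (still infinitely many) maximal second subsemimodules are exactly those of the original $K$. By Proposition \ref{2.2}(a) the annihilator of each maximal second subsemimodule is a prime $k$-ideal, so a pigeonhole argument yields a countably infinite family $L_1,L_2,\dots$ of pairwise distinct maximal second subsemimodules of $K$ which either (i) all share one annihilator $\mathfrak{p}$, or (ii) have pairwise distinct annihilators $\mathfrak{p}_i:=Ann_R(L_i)$. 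In either case the plan is to produce an infinite strictly descending chain of socle subsemimodules of $M$, contradicting the hypothesis.

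Put $K_n:=\sum_{i\ge n}L_i$ for $n\ge 1$. Each $K_n$ is non-zero (it contains $L_n$), and since every $L_i$ is a second subsemimodule of $M$ contained in $K_n$ we have $sec(K_n)=K_n$; hence each $K_n$ is a socle subsemimodule and $K_1\supseteq K_2\supseteq\cdots$. If this chain were strictly decreasing we would already be done, so the descending chain condition gives an $m$ with $K_m=K_{m+1}$, and since it must then stabilise completely, $K_n=K_{n+1}$ for all $n\ge m$; after discarding finitely many of the $L_n$ and re-indexing, this says $L_j\subseteq\sum_{i>j}L_i$ for every $j\ge1$. Thus the task is reduced to showing that a maximal second subsemimodule of $K$ cannot be contained in the sum of the remaining ones.

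In case (i), the inclusion $L_1\subseteq\sum_{i>1}L_i$ together with Proposition \ref{df2.9}(a) shows that $T:=\sum_{i>1}L_i$ is a $\mathfrak{p}$-second subsemimodule of $K$; since the $L_i$ are distinct, $T$ properly contains some $L_i$, and as $T\subseteq K$ this contradicts the maximality of that $L_i$ (the boundary case $T=K$, in which $K$ itself is second, being handled separately). In case (ii) one would like to replace the infinite inclusion $L_1\subseteq\sum_{i>1}L_i$ by an inclusion into a \emph{finite} subsum $L_1\subseteq L_{i_1}+\cdots+L_{i_k}$: given such a subsum, a prime-avoidance argument on $\mathfrak{p}_{i_1},\dots,\mathfrak{p}_{i_k}$ — which is precisely Proposition \ref{28.51} when $M$ is a comultiplication semimodule — forces $L_1\subseteq L_{i_j}$ for some $j$, whence $L_1=L_{i_j}$ by maximality, a contradiction. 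Carrying out this reduction to a finite subsum is the delicate step, and I expect it to be the main obstacle of the proof: it is automatic when $M$ is a comultiplication semimodule or when the $L_i$ are finitely generated, but in the stated generality it must be extracted from the descending chain condition itself — for instance by exploiting that, applying the stabilisation argument after deleting any one $L_j$, one gets $\bigcap_{i\ne j}\mathfrak{p}_i\subseteq\mathfrak{p}_j$ for every $j$, and then ruling out infinitely many distinct primes with this property.
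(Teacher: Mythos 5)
There is a genuine gap, and you flag it yourself: the whole of case (ii) hinges on replacing the inclusion $L_1\subseteq\sum_{i>1}L_i$ by an inclusion into a finite subsum, and no mechanism for this is given. Proposition \ref{28.51}, which you invoke for the prime-avoidance step, is only proved for comultiplication semimodules, and neither comultiplication nor finite generation of the $L_i$ is assumed in Theorem \ref{t3.6}; the relation $\bigcap_{i\neq j}\mathfrak{p}_i\subseteq\mathfrak{p}_j$ that you extract from the stabilised chain does not by itself exclude infinitely many distinct primes (an infinite intersection of primes lying inside a prime forces nothing). Case (i) is not closed either: when at least two of the $L_i$ are distinct, your own maximality argument shows that $T=\sum_{i>1}L_i$ cannot be properly between some $L_i$ and $K$, so the conclusion is precisely that $T=K$, i.e. the situation you set aside as a ``boundary case to be handled separately'' is in fact the generic outcome of the argument. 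And with the paper's definition of a maximal second subsemimodule (no second $L$ with $N\subset L\subset K$), the assertion that a second $K$ can have only finitely many maximal second subsemimodules is not immediate --- it is essentially an instance of the statement being proved --- so ``handled separately'' conceals real work.

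The paper's proof takes a different and shorter route that uses the DCC directly through a minimal counterexample: if some non-zero $N$ had infinitely many maximal second subsemimodules, so would the socle subsemimodule $sec(N)$, and by the descending chain condition one may choose a socle subsemimodule $S$ minimal with this property. Such an $S$ cannot be second, so there are an ideal $I$ and a subsemimodule $L\subset S$ with $IS\subseteq L$ and $S\not\subseteq(0:_MI)$; every maximal second subsemimodule $V$ of $S$ then satisfies either $IV=0$, whence $V\subseteq(0:_SI)$, or $V=IV\subseteq L$, and by minimality of $S$ each of the two strictly smaller pieces accounts for only finitely many of the $V$'s, a contradiction. If you wish to salvage your chain-of-tail-sums strategy you would need independent arguments both for the finite-subsum reduction in case (ii) and for the case where $K$ itself is second; as written, the proof does not go through.
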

\begin{proof}
Suppose that there exists a non-zero subsemimodule $N$ of $M$
such that $N$ has an infinite number of maximal second subsemimodules
and look for a contradiction. Then $sec(N)$ is a socle
subsemimodule of $M$ and it has an infinite number of maximal second
subsemimodules. Let $S$ be a socle subsemimodule of $M$ chosen minimal
such that $S$ has an infinite number of maximal second
subsemimodules. Then $S$ is not second. Thus there exists a subsemimodule
$L$ of $M$ and an ideal $I$ of $R$ such that $L\subset S$ and $S
\not \subseteq (0:_MI)$. Let $V$ be a maximal second subsemimodule of
$M$ contained in $S$. Then $V \subseteq (0:_SI)$ or $V \subseteq
L$. By the choice of $S$, both the semimodules $(0:_SI)$ and $L$ have
only finitely many maximal second subsemimodules. Therefore, there is
only a finite number of possibilities for the semimodule $S$, which
is a contradiction.
\end{proof}

\begin{cor}\label{c3.7}
Every Artinian $R$-semimodule contains only a finite number of maximal second subsemimodules.
\end{cor}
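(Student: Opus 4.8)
The plan is to obtain this immediately from Theorem \ref{t3.6}. The only thing to check is that an Artinian $R$-semimodule satisfies the hypothesis of that theorem, namely the descending chain condition on socle subsemimodules. But by definition an Artinian $R$-semimodule satisfies the $DCC$ on \emph{all} of its subsemimodules, and the socle subsemimodules form a subcollection of these; so any descending chain of socle subsemimodules stabilises, and the hypothesis of Theorem \ref{t3.6} holds.

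Having verified this, I would split into two cases. If $M = 0$, then since by Definition \ref{2.1} a second subsemimodule is required to be non-zero, $M$ contains no second subsemimodules at all, hence no maximal ones, so the count is $0$ and the statement holds vacuously. If $M \neq 0$, then $M$ is itself a non-zero subsemimodule of $M$, and Theorem \ref{t3.6} applied with $N = M$ yields that $M$ has only finitely many maximal second subsemimodules. I do not expect any genuine obstacle here; the corollary is essentially a specialization of Theorem \ref{t3.6}, and the only minor points of care are the degenerate case $M = 0$ and the observation that ``maximal second subsemimodule of $M$'' in the corollary is exactly the $K = M$ instance of the preceding definition.
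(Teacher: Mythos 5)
Your proposal is correct and follows exactly the route the paper intends: the corollary is stated without proof precisely because, as you observe, the Artinian condition gives the DCC on all subsemimodules and in particular on socle subsemimodules, so Theorem \ref{t3.6} applies with $N=M$ (the case $M=0$ being vacuous). Your added care about the degenerate case and the $K=M$ instance of the definition is fine but not a departure from the paper's argument.
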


\end{document}